\documentclass[11pt]{article}

\usepackage{amssymb, amsmath, amsthm, graphicx}
\usepackage[left=1in,top=1in,right=1in]{geometry}

\usepackage{subfigure}

      \theoremstyle{plain}
      \newtheorem{theorem}{Theorem}[section]
      \newtheorem{lemma}[theorem]{Lemma}
            
      \newtheorem{corollary}[theorem]{Corollary}
            \newtheorem{observation}[theorem]{Observation}

      \theoremstyle{definition}

      \theoremstyle{remark}

\title{Coloring intersection graphs of $x$-monotone curves in the plane
}

\author{Andrew Suk\thanks{Massachusetts Institute of Technology, Cambridge. Supported by an NSF Postdoctoral Fellowship. Email: {\tt asuk@math.mit.edu}.  }
}

\begin{document}

\maketitle

\medskip

\begin{abstract}

A class of graphs $\mathcal{G}$ is $\chi$-\emph{bounded} if the chromatic number of the graphs in $\mathcal{G}$ is bounded by some function of their clique number.  We show that the class of intersection graphs of simple families of $x$-monotone curves in the plane intersecting a vertical line is $\chi$-bounded.  As a corollary, we show that the class of intersection graphs of rays in the plane is $\chi$-bounded, and the class of intersection graphs of unit segments in the plane is $\chi$-bounded.

\end{abstract}

\section{Introduction}

For a graph $G$, The \emph{chromatic number} of $G$, denoted by $\chi(G)$, is the minimum number of colors required to color the vertices of $G$ such that any two adjacent vertices have distinct colors.  The \emph{clique number} of $G$, denoted by $\omega(G)$, is the size of the largest clique in $G$.  We say that a class of graphs $\mathcal{G}$ is $\chi$-\emph{bounded} if there exists a function $f:\mathbb{N} \mapsto \mathbb{N}$ such that every $G\in \mathcal{G}$ satisfies $\chi(G) \leq f(\omega(G))$.  Although there are triangle-free graphs with arbitrarily large chromatic number \cite{erdos,west}, it has been shown that certain graph classes arising from geometry are $\chi$-bounded.

Given a collection of objects $\mathcal{F}$ in the plane, the \emph{intersection graph} $G(\mathcal{F})$ has vertex set $\mathcal{F}$ and two objects are adjacent if and only if they have a nonempty intersection.  For simplicity, we will shorten $\chi(G(\mathcal{F})) = \chi(\mathcal{F})$ and $\omega(G(\mathcal{F})) = \omega(\mathcal{F})$.  The study of the chromatic number of intersection graphs of objects in the plane was stimulated by the seminal papers of Asplund and Gr\"unbaum \cite{ap} and Gy\'arf\'as and Lehel \cite{gyarfas,gyarfas2}.  Asplund and Gr\"unbaum showed that if $\mathcal{F}$ is a family of axis parallel rectangles in the plane, then $\chi(\mathcal{F}) \leq 4\omega(\mathcal{F})^2$. Gy\'arf\'as and Lehel \cite{gyarfas,gyarfas2} showed that if $\mathcal{F}$ is a family of chords in a circle, then $\chi(\mathcal{F})\leq 2^{\omega(\mathcal{F})}\omega(\mathcal{F})^3$.  Over the past 50 years, this topic has received a large amount of attention due to its application in VLSI design \cite{vlsi}, map labeling \cite{map}, graph drawing \cite{fox,suk}, and elsewhere.  For more results on the chromatic number of intersection graphs of objects in the plane and in higher dimensions, see \cite{burling,fox,gyarfas2,kim,kostochka,kos1,kos2,sean,sean2,seanL}.

In this paper, we study the chromatic number of intersection graphs of $x$-monotone curves in the plane.  We say a family of curves $\mathcal{F}$ is \emph{simple}, if every pair of curves intersect at most once and no two curves are tangent, that is, if two curves have a common interior point, they must properly cross at that point.  Our main theorem is the following.

\begin{theorem}
\label{main}
The class of intersection graphs of simple families of $x$-monotone curves in the plane intersecting a vertical line is $\chi$-bounded.
\end{theorem}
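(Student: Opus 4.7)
The plan is to first reduce the problem to a ``grounded'' one-sided version and then prove $\chi$-boundedness of grounded families by induction on the clique number.

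\textbf{Reduction step.} After a small perturbation we may assume that no two curves of $\mathcal{F}$ meet on the vertical line $\ell$. For each $c\in\mathcal{F}$, I would cut $c$ at its intersection point with $\ell$ into a left half $c^L$ and a right half $c^R$; each is an $x$-monotone curve with one endpoint on $\ell$. Two curves $c,c'\in\mathcal{F}$ cross iff $c^L\cap (c')^L\ne\emptyset$ or $c^R\cap (c')^R\ne\emptyset$, so $G(\mathcal{F})$ is the edge-union of $G(\mathcal{F}^L)$ and $G(\mathcal{F}^R)$. Consequently $\chi(G(\mathcal{F}))\le \chi(G(\mathcal{F}^L))\cdot\chi(G(\mathcal{F}^R))$, and by reflecting through $\ell$ it suffices to bound $\chi(\mathcal{G})$ in terms of $\omega(\mathcal{G})$ for a simple family $\mathcal{G}$ of \emph{right-grounded} $x$-monotone curves---each with one endpoint on $\ell$ and otherwise lying in the closed right half-plane.

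\textbf{Induction on clique number for the grounded case.} I would prove by induction on $k$ that there is a function $g$ with $\chi(\mathcal{G})\le g(k)$ for every simple right-grounded family of clique number at most $k$. Order the curves by the $y$-coordinate of their grounded endpoint on $\ell$. The basic geometric fact I plan to exploit is that, for curves $c_i,c_j$ with endpoint heights $y_i<y_j$, the curves $c_i$ and $c_j$ cross iff $c_j$ dips below $c_i$ at some common $x$-coordinate to the right of $\ell$. Following the Gy\'arf\'as/McGuinness technique for chord graphs and $L$-shapes, I would pick a curve $c^*$ with extremal grounded endpoint, consider its neighborhood $N(c^*)$, partition $N(c^*)$ into a bounded-in-$k$ number of parts each of clique number at most $k-1$ (hence colorable with $g(k-1)$ colors by induction), and then iterate a BFS-layer coloring scheme in which colors are recycled across a bounded block of consecutive layers.

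\textbf{Main obstacle.} The crux is the partitioning step: decomposing $N(c^*)$ into a bounded-in-$k$ number of parts of clique number at most $k-1$ using only the $x$-monotone structure. My intended approach is to peel the neighbors of $c^*$ greedily by the leftmost $x$-coordinate at which each dips below $c^*$, and to argue that within one peel any $k$-clique would have to cross $c^*$ plus all its peers in a way forbidden by simplicity and $x$-monotonicity, forcing a drop in clique number. Making this precise---and then combining the resulting ``neighborhood bound'' with a BFS/layering argument to convert it into a global chromatic bound---will be the main technical work, and I expect it to yield a function $g$ growing at most exponentially in $k$.
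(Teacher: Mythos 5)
Your reduction step is exactly the paper's: cut each curve at the vertical line, note $\chi(\mathcal{F})\le\chi(\mathcal{F}^L)\cdot\chi(\mathcal{F}^R)$, and reduce to simple families of $x$-monotone ``right-flag'' curves grounded on the line. Beyond that, however, the proposal contains no proof, and the strategy you sketch for the grounded case misidentifies where the difficulty lies. The step you call the main obstacle --- partitioning $N(c^*)$ into boundedly many parts of clique number at most $k-1$ --- is not an obstacle at all: any clique inside $N(c^*)$ extends by $c^*$ itself, so the whole neighborhood already has clique number at most $k-1$ and is $g(k-1)$-colorable by induction, with no peeling needed. The genuine difficulty is that a bound on the chromatic number of neighborhoods does not convert into a global bound by any ``recycle colors across BFS layers'' scheme: in a triangle-free graph every neighborhood is an independent set, yet the chromatic number can be unbounded, and the construction of Pawlik et al.\ shows this actually happens for intersection graphs of segments once the grounding condition is dropped. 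So any correct argument must exploit the grounded structure globally, not just locally around one curve, and your sketch never explains how the grounding enters.

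For comparison, the paper's proof of the grounded statement is a delicate induction on $k$ of a quite different shape. BFS layering from a root curve $C_1$ (Lemma \ref{distance}) reduces to one layer $\mathcal{F}^d$, and then an $\alpha$-sequence/ordered-graph lemma (Lemma \ref{sequence}) produces intersecting pairs $C_a,C_b$ such that the curves strictly between them have large chromatic number. The key technical result (Lemma \ref{crucial}) shows that, unless there are $k+1$ pairwise crossing curves, one can discard the curves between $C_a$ and $C_b$ that meet $C_a$, $C_b$, or their neighbors, losing only a bounded amount of chromatic number; its proof needs a decomposition via incomparability (perfect) graphs, a surgery on curves producing auxiliary families $\mathcal{L}$ and $\mathcal{U}$ with no $k$ pairwise crossings, and McGuinness's $2^{100}$ bound for triangle-free grounded families. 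The payoff is that any path from $C_1$ to a surviving curve $C_q$ must cross $C_a$ or $C_b$ from outside the region they bound with the vertical line; combined with the extremality condition $x(C_q)\le x(\mathcal{K})$ in the ``type 1/2/3 configurations'' and $x$-monotonicity, that forced crossing propagates to a whole $k$-clique, yielding $k+1$ pairwise crossing curves. None of this machinery (nor any substitute for it) appears in your proposal, so as it stands the argument has an essential gap precisely at the heart of the theorem.
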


\noindent  McGuinness proved a similar statement in \cite{sean,sean2}, by showing that if $\mathcal{F}$ is a family of curves in the plane with no three pairwise intersecting members, with the additional property that there exists another curve $\gamma$ that intersects all of the members in $\mathcal{F}$ exactly once, then $\chi(F) < 2^{100}$.  Surprisingly, Theorem \ref{main} does not hold if one drops the condition that the curves intersect a vertical line.  Indeed, a recent result of Pawlik, Kozik, Krawczyk, Larso\'n Micek, Trotter, and Walczak \cite{poles} shows that the class of intersection graphs of segments in the plane is not $\chi$-bounded.  However, it is not clear if the simplicity condition in Theorem \ref{main} is necessary.  As an immediate corollary of Theorem \ref{main}, we have the following.

\begin{corollary}
The class of intersection graphs of rays in the plane is $\chi$-bounded.
\end{corollary}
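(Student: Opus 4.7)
The plan is to reduce the corollary to Theorem \ref{main} by partitioning any family of rays into two simple sub-families, each intersecting a common vertical line.

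Let $\mathcal{F}$ be a finite family of rays. Since only finitely many directions occur, after a generic small rotation of the plane I may assume that no ray is vertical; this leaves the intersection graph unchanged. Then every ray is strictly $x$-monotone, and I would split $\mathcal{F} = \mathcal{F}_R \cup \mathcal{F}_L$ according to whether a ray points in the positive or negative $x$-direction. Choosing $M$ larger than the absolute value of every starting $x$-coordinate in $\mathcal{F}$, every ray of $\mathcal{F}_R$ crosses the vertical line $x = M$, and every ray of $\mathcal{F}_L$ crosses the vertical line $x = -M$. Applying Theorem \ref{main} to $\mathcal{F}_R$ and $\mathcal{F}_L$ separately with disjoint color palettes then yields $\chi(\mathcal{F}) \leq \chi(\mathcal{F}_R) + \chi(\mathcal{F}_L) \leq 2 f(\omega(\mathcal{F}))$, giving the desired $\chi$-boundedness.

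The main technical obstacle is the simplicity hypothesis of Theorem \ref{main}: while two rays supported on distinct lines meet transversally in at most one point, two rays lying on a common line may overlap in a half-line, violating simplicity. Inside $\mathcal{F}_R$, all same-line rays point rightward and are therefore nested by inclusion, so they do pairwise intersect. I would handle this by a small generic perturbation, tilting each ray by a tiny angle about its endpoint, with signs chosen so that two previously collinear rightward rays now meet at a point very far to the right, which lies on both perturbed rays and thus preserves their intersection. A sufficiently small perturbation creates no new intersections and destroys none of the existing transversal ones, so the perturbed family is simple with the same intersection graph as $\mathcal{F}_R$; the argument for $\mathcal{F}_L$ is symmetric.
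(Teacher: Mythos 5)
Your overall reduction is exactly what the paper has in mind: it states this corollary without proof as an immediate consequence of Theorem \ref{main}, and your splitting of the rays (after a rotation killing vertical directions) into the rightward family $\mathcal{F}_R$ and leftward family $\mathcal{F}_L$, each crossing a far-away vertical line, followed by two applications of Theorem \ref{main} with disjoint palettes, is the intended argument, and the bound $\chi(\mathcal{F})\leq 2f(\omega(\mathcal{F}))$ is correct.

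The one genuine flaw is in the degeneracy handling: for unbounded rays it is simply not true that ``a sufficiently small perturbation creates no new intersections.'' Tilting a ray about its apex by an angle $\theta$ displaces its points at distance $r$ by about $r\theta$, which is unbounded in $r$. Concretely, suppose some rightward rays lie on one horizontal line and others lie on a second, parallel horizontal line above it; the two groups are pairwise disjoint, but any upward tilt of a ray in the lower group (which you may be forced to perform to repair a collinear pair on its own line) makes it cross the upper line far to the right and hence typically cross rays of the upper group, creating new edges of the intersection graph, while tilting downward instead just transfers the problem to parallel lines below. So the signs cannot be chosen per collinear class in isolation, and ``sufficiently small'' does not rescue the claim. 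The standard repair is to clip first: since $\mathcal{F}$ is finite, all apexes and all pairwise intersection points lie in a bounded region, so replacing each ray by its intersection with a sufficiently large disk $D$ (chosen to extend well beyond the lines $x=\pm M$) changes no adjacencies and makes every object compact. Disjoint compact segments are at positive distance, so now your tilting of each collinear class (with magnitudes chosen so the new crossings stay inside $D$), or any sufficiently small generic perturbation preserving existing crossings, really does preserve the intersection graph; it also lets you remove the other degeneracy you did not address, namely the apex of one ray lying on another ray, which is a touching rather than a proper crossing and hence also violates the paper's definition of a simple family. With that adjustment your proof is complete.
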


\noindent By applying partitioning \cite{furedi} and divide and conquer \cite{pach} arguments, Theorem \ref{main} also implies the following results.  Since these arguments are fairly standard, we omit their proofs.

\begin{theorem}
Let $\mathcal{S}$ be a family of segments in the plane, such that no $k$ members pairwise cross.  If the ratio of the longest segment to the shortest segment is bounded by $r$, then $\chi(\mathcal{S}) \leq c_{k,r}$ where $c_{k,r}$ depends only on $k$ and $r$.

\end{theorem}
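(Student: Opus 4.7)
The plan is to reduce to Theorem \ref{main} by a partitioning argument in the spirit of F\"uredi, chopping $\mathcal{S}$ into $O(r)$ subfamilies, each a simple family of $x$-monotone curves crossing a common vertical line; the bound then follows by applying Theorem \ref{main} to each subfamily.

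First, I would partition $\mathcal{S}$ into four angular classes via the slope windows $[j\pi/4,(j+1)\pi/4)$ for $j=0,1,2,3$; it suffices to bound $\chi$ on each class. For a fixed class, I rotate the plane so the window becomes $(-\pi/8,\pi/8)$, making every segment $x$-monotone with horizontal extent at least $\cos(\pi/8)$ times its length. After rescaling so that the shortest length is $1$, every segment has length in $[1,r]$ and horizontal extent strictly greater than $1/2$.

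Next, I would sieve with the vertical grid $V_i = \{x=i/2\}$ for $i\in\mathbb{Z}$. Since each horizontal extent exceeds $1/2$, every segment crosses at least one $V_i$; I assign each segment to the $V_i$ of smallest index it crosses. Two segments assigned to $V_i$ and $V_j$ with $|i-j|>2r$ cannot share a point, since horizontal extents are at most $r$; thus coloring the indices by $i \bmod (2r+1)$ partitions the assigned segments into $2r+1$ batches inside which only segments assigned to the \emph{same} line can be adjacent. Segments assigned to a fixed $V_i$ form an $x$-monotone family all meeting $V_i$, so (assuming general position, so that the family is simple) Theorem \ref{main} bounds their chromatic number by $f(k-1)$, where $f$ is the function furnished by Theorem \ref{main} and $\omega(\mathcal{S}) \leq k-1$. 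Multiplying through the four angular classes and $2r+1$ batches yields $\chi(\mathcal{S}) \leq 4(2r+1)f(k-1) =: c_{k,r}$.

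The main subtlety, beyond the routine bookkeeping, is verifying the simplicity hypothesis of Theorem \ref{main}. For segments this is automatic except when two segments are collinear and overlap. This is handled by a standard reduction: the segments lying on any single line form an interval graph with $\chi=\omega\leq k-1$, so one can precolor within each maximal collinear class using a common palette of $k-1$ colors and compose with the coloring above as ordered pairs; since every edge is either within-line (separated by the interval coloring) or between non-collinear segments (separated by the coloring above, applied after contracting each collinear class to a representative), the composition is proper and the final constant is inflated by at most a factor of $k-1$.
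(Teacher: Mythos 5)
The paper itself gives no proof of this theorem --- it is stated as a standard consequence of Theorem \ref{main} via a F\"uredi-type partitioning argument --- and your main argument is exactly that intended reduction: split into $O(1)$ direction classes, sieve by a vertical grid whose spacing is below the minimum horizontal extent, note that within a residue class of grid indices only segments assigned to the same grid line can interact, and apply Theorem \ref{main} to each such subfamily with clique number at most $k-1$. Up to minor bookkeeping this core is correct: when $r$ is not an integer you should batch modulo $2\lceil r\rceil+1$ (you need same-batch indices to differ by at least $2r+1$, and ``$i \bmod (2r+1)$'' only makes sense for integers), and the final bound $4(2\lceil r\rceil+1)f(k-1)$ is best justified by giving the batches disjoint palettes (summing), not literally by multiplying chromatic numbers of a vertex partition, which in general is not a valid bound --- though here it produces the same number.

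The one genuine flaw is your patch for collinear overlapping segments. A coloring obtained ``after contracting each collinear class to a representative'' does not separate the edges you need it to: if $s$ lies on a line $\ell$, is not the chosen representative of $\ell$, and meets a segment $t$ not lying on $\ell$, the representative of $\ell$ need not meet $t$, so nothing forces the first coordinates of $s$ and $t$ to differ, and the second coordinate (interval colors within each line, drawn from a common palette) does not separate non-collinear pairs either; the composed coloring can thus fail precisely on the non-collinear edges you claim it handles. The degenerate case is easy to repair, however: every segment assigned to the grid line $V_i$ crosses $V_i$, so all segments of that subfamily lying on a single supporting line $\ell$ pass through the one point $\ell\cap V_i$ and hence pairwise intersect; reading ``cross'' as ``intersect'' (which the statement needs anyway to be true), each such collinear class has at most $k-1$ members, so each line-subfamily splits into $k-1$ parts containing no two collinear segments, each part is simple, and Theorem \ref{main} applies, at the cost of one extra factor $k-1$ in $c_{k,r}$. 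Alternatively, adopt the general-position convention implicit in the statement, under which your main argument already suffices.
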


\begin{theorem}
\label{log}
Let $\mathcal{F}$ be a simple family of $n$ $x$-monotone curves in the plane, such that no $k$ members pairwise cross.  Then $\chi(\mathcal{F}) \leq c_k\log n$, where $c_k$ is a constant that depends only on $k$.
\end{theorem}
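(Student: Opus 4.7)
The plan is to prove Theorem \ref{log} by a standard divide-and-conquer argument on a vertical line, using Theorem \ref{main} as a black box. Let $f(k)$ denote a $\chi$-bounding function for simple families of $x$-monotone curves meeting a common vertical line, as guaranteed by Theorem \ref{main}, and let $T(n)$ be the maximum of $\chi(\mathcal{F})$ over all simple families $\mathcal{F}$ of $n$ $x$-monotone curves with $\omega(\mathcal{F}) \leq k-1$. The goal is to establish $T(n) \leq f(k-1) \cdot \lceil \log_2 n \rceil + O(1)$.

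First, I would choose the splitting line. For each curve $c_i \in \mathcal{F}$, let $a_i$ and $b_i$ denote the smallest and largest $x$-coordinates on $c_i$. Pick a vertical line $\ell$ at $x = x_0$ so that at most $\lfloor n/2 \rfloor$ curves satisfy $a_i > x_0$ (i.e.\ lie strictly to the right of $\ell$); such an $x_0$ exists by taking a suitable quantile of the $a_i$'s. Then the curves with $b_i < x_0$ (those lying strictly to the left of $\ell$) also satisfy $a_i \leq x_0$, so at most $\lceil n/2 \rceil$ of them exist as well. Partition $\mathcal{F}$ into three subfamilies: $\mathcal{F}_L$ (curves strictly left of $\ell$), $\mathcal{F}_R$ (curves strictly right of $\ell$), and $\mathcal{F}_M$ (curves meeting $\ell$).

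Next, color each piece. By Theorem \ref{main}, $\chi(\mathcal{F}_M) \leq f(\omega(\mathcal{F}_M)) \leq f(k-1)$, using a palette $P_0$. Recursively color $\mathcal{F}_L$ and $\mathcal{F}_R$; since curves in $\mathcal{F}_L$ and $\mathcal{F}_R$ are separated by $\ell$ and hence disjoint, we can reuse the same palette $P_1$ of size $T(\lceil n/2 \rceil)$ on both sides simultaneously. Because $P_0$ and $P_1$ are disjoint, the overall coloring is proper, yielding the recursion
\[
T(n) \leq T(\lceil n/2 \rceil) + f(k-1),
\]
which solves to $T(n) \leq c_k \log n$ with $c_k = f(k-1)/\log 2$ (absorbing lower-order terms into the constant).

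The only real thing to verify is the splitting step: the argument needs a line $\ell$ such that both $|\mathcal{F}_L|, |\mathcal{F}_R| \leq n/2$, which is handled by the quantile choice above. No topological subtlety is needed beyond the observation that $x$-monotone curves strictly to the left of $\ell$ cannot meet $x$-monotone curves strictly to the right. The main conceptual content is therefore entirely absorbed by Theorem \ref{main}, and the recursion contributes only a logarithmic blow-up. I do not expect any real obstacle; the proof is truly a routine application of the partition lemma from \cite{pach}, which is why the author omits it.
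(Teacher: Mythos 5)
Your proposal is correct and is exactly the divide-and-conquer argument the paper has in mind for Theorem \ref{log} (the paper omits it, citing \cite{pach}): split at a vertical line through a median left endpoint, color the curves meeting the line with the bounded palette supplied by Theorem \ref{main}, recurse on the two sides reusing a single palette since they are separated by the line, and solve $T(n)\leq T(\lceil n/2\rceil)+f(k-1)$. The only slip is in how you bound $|\mathcal{F}_L|$: the right reason is that strictly-left curves have $a_i<x_0$ and the median choice of $x_0$ among the $a_i$'s guarantees at most about $n/2$ left endpoints lie strictly below $x_0$ (your stated inference from ``$a_i\leq x_0$'' would bound the wrong side of the count), a harmless fix that does not affect the recursion.
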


\noindent This improves the previous known bound of $(\log n)^{15\log k}$ due to Fox and Pach \cite{fox}.  We note that the Fox and Pach bound holds without the simplicity condition.

 Recall that a \emph{topological graph} is a graph drawn in the plane such that its vertices are represented by points and its edges are represented by non-self-intersecting arcs connecting the corresponding points.  A topological graph is \emph{simple} if every pair of its edges intersect at most once.  Since every $n$-vertex planar graph has at most $3n-6$ edges, Theorem \ref{main} gives a new proof of the following result due to Valtr.

\begin{theorem}[\cite{valtr}]
\label{valtr}
Let $G =(V,E)$ be an $n$-vertex simple topological graph with edges drawn as $x$-monotone curves.  If there are no $k$ pairwise crossing edges in $G$, then $|E(G)| \leq c_kn\log n$, where $c_k$ is a constant that depends only on $k$.

\end{theorem}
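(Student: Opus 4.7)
The plan is to apply Theorem \ref{log} directly to the set of edges of $G$, viewed as a simple family of $x$-monotone curves in the plane. First, I would verify that the edge set $\mathcal{F} = E(G)$ satisfies the hypotheses of that theorem: because $G$ is a simple topological graph with $x$-monotone edges, every pair of edges intersects at most once (either at a shared vertex or at a proper crossing) and no two are tangent, so $\mathcal{F}$ is a simple family of $x$-monotone curves in the sense used throughout the paper; and by assumption, no $k$ of them pairwise cross. Theorem \ref{log} then yields a proper coloring of the intersection graph $G(\mathcal{F})$ using at most $c_k \log |E|$ colors.

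Next, I would exploit the structure of each color class. A color class is an independent set in $G(\mathcal{F})$, i.e., a collection of edges of $G$ that are pairwise disjoint as point sets in the plane, including at their endpoints. In particular, no two edges in a color class share a vertex, so each color class is a matching in $G$; consequently it contains at most $n/2$ edges. (One could alternatively note that a color class together with $V(G)$ forms a plane graph, so it has at most $3n-6$ edges; the matching bound is tighter and suffices.)

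Summing over all color classes gives $|E| \leq (n/2)\cdot c_k \log|E|$. Since $|E| \leq \binom{n}{2}$, we have $\log|E| \leq 2\log n$, and after absorbing constants this yields $|E| \leq c_k' n\log n$, as required.

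I do not anticipate a serious obstacle; the argument is essentially one invocation of Theorem \ref{log} followed by a trivial planarity/matching bound. The only point requiring a little care is that the intersection graph of $\mathcal{F}$ can contain arbitrarily large cliques arising from pencils of edges sharing a common vertex of $G$, so one must be sure to read the hypothesis of Theorem \ref{log} correctly: it is the pairwise \emph{crossing} number, not the pairwise intersection (clique) number, that must be bounded. Since the hypothesis of Theorem \ref{valtr} is precisely about pairwise crossings, this matches, and high-degree vertices of $G$ do not obstruct the application.
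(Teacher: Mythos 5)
Your plan is the right one in outline---color the edge set via Theorem \ref{log} and bound the size of each color class---but the step where you apply Theorem \ref{log} directly to $\mathcal{F}=E(G)$ has a genuine gap, and it sits exactly at the point you flag as "only needing a little care." Throughout the paper, ``$k$ pairwise crossing members'' is used interchangeably with ``$k$ pairwise intersecting members'': Theorems \ref{main}, \ref{flag} and \ref{log} bound $\chi(\mathcal{F})$, the chromatic number of the intersection graph in which \emph{any} nonempty intersection (including a shared endpoint) gives an edge, in terms of the clique number of that same intersection graph. Under your literal reading (hypothesis about proper crossings only, conclusion about the full intersection graph) Theorem \ref{log} is simply false: $n$ $x$-monotone segments emanating from a common endpoint form a simple family with no two properly crossing members, yet their intersection graph is $K_n$, so $\chi=n\not\leq c_2\log n$. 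Under the correct reading, the family $E(G)$ fails the hypothesis as soon as $G$ has a vertex of degree at least $k$, since the edges incident to that vertex pairwise intersect. So the high-degree pencils \emph{do} obstruct the direct application, contrary to your concluding paragraph, and the claim that each color class is a matching is unsupported.

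The repair is standard and is what the paper intends. In a simple topological graph two adjacent edges intersect at most once, hence only at their common vertex, hence they never cross; so clip each edge slightly inside a small neighborhood of each of its endpoints. The clipped curves are still $x$-monotone and form a simple family, and two of them intersect if and only if the original edges cross, so the hypothesis of Theorem \ref{valtr} translates into ``no $k$ pairwise intersecting members'' for the clipped family. Theorem \ref{log} now applies and yields a coloring with at most $c_k\log|E|$ colors in which each color class is a set of pairwise noncrossing edges of $G$; such a class, together with $V$, is a plane graph and therefore has at most $3n-6$ edges (this is why the paper invokes the $3n-6$ planarity bound rather than a matching bound of $n/2$). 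Then $|E|\leq(3n-6)\,c_k\log|E|$ together with $\log|E|\leq 2\log n$ gives $|E|\leq c_k'\,n\log n$, exactly as in your final bookkeeping step.
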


\noindent We note that I recently showed that Theorem \ref{valtr} holds without the simplicity condition \cite{suk}.

\section{Definitions and notation}

A curve $C$ in the plane is called a \emph{right-flag} (\emph{left-flag}), if one of its endpoints lies on the $y$-axis, and $C$ is contained in the closed right (left) half-plane.  Let $\mathcal{F}$ be a family of $x$-monotone curves in the plane, such that each member intersects the $y$-axis.  Notice that each curve $C \in \mathcal{F}$ can be partitioned into two parts $C = C_1\cup C_2$, where $C_1$ ($C_2$) is a right-flag (left-flag).  By defining $\mathcal{F}_1$ $(\mathcal{F}_2)$ to be the family of $x$-monotone right-flag (left-flag) curves arising from $\mathcal{F}$, we have  $\chi(\mathcal{F}) \leq \chi(\mathcal{F}_1)\chi(\mathcal{F}_2)$. Hence in order to prove Theorem \ref{main}, it suffices to prove the following theorem on simple families of $x$-monotone right-flag curves.

\begin{theorem}
\label{flag}
Let $\mathcal{F}$ be a simple family of $x$-monotone right-flag curves.  If $\chi(\mathcal{F}) > 2^{(5^{k + 1} - 121)/4}$, then $\mathcal{F}$ contains $k$ pairwise crossing members.
\end{theorem}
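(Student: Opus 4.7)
The plan is induction on $k$.  Writing $f(k) = 2^{(5^{k+1}-121)/4}$, one checks the key recurrence $f(k+1) = 2^{121}\,f(k)^5$, so the proof reduces to a base case together with a multiplicative inductive step.  For $k \le 2$ the statement is immediate: $f(2)=2$, and any family with $\chi(\mathcal{F}) > 2$ trivially contains two crossing members (a $2$-clique in its intersection graph).  Assume the theorem holds for $k$, and let $\mathcal{F}$ be a simple family of $x$-monotone right-flag curves with $\chi(\mathcal{F}) > f(k+1)$; the goal is to exhibit $k+1$ pairwise crossing members of $\mathcal{F}$.

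Following the common-transversal framework developed by McGuinness \cite{sean,sean2}, the target reduction is to extract from $\mathcal{F}$ a distinguished curve $C_0 \in \mathcal{F}$ together with a subfamily $\mathcal{F}^* \subseteq \mathcal{F}$ satisfying $\chi(\mathcal{F}^*) > f(k)$ and such that every member of $\mathcal{F}^*$ crosses $C_0$.  Once this is in hand, the inductive hypothesis applied to $\mathcal{F}^*$ produces $k$ pairwise crossing members of $\mathcal{F}^*$, all of which also cross $C_0$; together with $C_0$ they form the desired $(k+1)$-clique.  To produce such $C_0$ and $\mathcal{F}^*$, I would order the members of $\mathcal{F}$ as $C_1, \ldots, C_n$ by the $y$-coordinate of their endpoint on the $y$-axis, and associate to each $C_i$ the ``span'' $[\ell(i), r(i)]$ of indices of curves that it crosses.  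A sweep-based case analysis in the spirit of Asplund and Gr\"unbaum \cite{ap} and McGuinness \cite{sean}, classifying each curve by how its span interacts with a chosen reference interval, should decompose $\mathcal{F}$ into a bounded number of structural classes; the ``$5$'' in the exponent of the recurrence strongly suggests that five classes arise, each either directly providing a candidate pair $(C_0,\mathcal{F}^*)$ or admitting a coloring with at most $f(k)$ colors via the inductive hypothesis, so that combining them yields the claimed bound $f(k+1) = 2^{121}\,f(k)^5$ (the overhead $2^{121}$ absorbing the base case and an auxiliary McGuinness-type $K_3$-free argument).

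The main technical obstacle is the structural extraction lemma itself: one needs a quantitative statement saying that any simple family of $x$-monotone right-flag curves with sufficiently large chromatic number must contain a large-chromatic subfamily all of whose members cross some single member of $\mathcal{F}$.  McGuinness proves such an extraction in \cite{sean,sean2} for triangle-free intersection graphs with a common transversal; here one must both relax the triangle-free hypothesis (working inductively with $K_{k+1}$-free families) and exploit the right-flag geometry --- in particular, the fact that every member of $\mathcal{F}$ already meets the $y$-axis --- to define the appropriate span ordering.  The five-fold case split and the additive overhead of $121$ are expected to arise from this case analysis: four of the five classes will be absorbed by the inductive hypothesis applied to geometrically simpler configurations, while the fifth furnishes the desired $\mathcal{F}^*$.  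Making the constants line up to give exactly $2^{121}\,f(k)^5$ is the delicate bookkeeping step, and is presumably where the specific constant $121$ enters through the base-case coloring of $K_3$-free right-flag intersection graphs.
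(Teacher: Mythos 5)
Your proposal is a plan rather than a proof, and the plan hinges on a lemma you never establish: the ``structural extraction lemma'' asserting that any simple family of $x$-monotone right-flag curves with sufficiently large chromatic number contains a single curve $C_0$ together with a subfamily $\mathcal{F}^*$, all of whose members cross $C_0$, with $\chi(\mathcal{F}^*) > f(k)$. Everything else in your argument (the recurrence $f(k+1)=2^{121}f(k)^5$, the base case, the final assembly of a $(k+1)$-clique from $C_0$ and the inductive $k$-clique in $\mathcal{F}^*$) is routine once that lemma is granted, but the lemma itself is exactly the hard content, and your justification for it is speculative reverse-engineering from the constants (``the $5$ suggests five classes arise''). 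No definition of the five classes, no proof that four of them are $K_k$-crossing-free so that the inductive hypothesis colors them, and no argument that the fifth yields the pair $(C_0,\mathcal{F}^*)$ is given; McGuinness's triangle-free results do not supply this relaxation to $K_{k+1}$-free families, which is precisely what would have to be proved.

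It is worth noting that the paper never proves such an extraction statement, and its induction is used quite differently: the hypothesis for $k$ is invoked only to conclude that auxiliary families with no $k$ pairwise crossing members have chromatic number at most $2^{\lambda_k}$, hence decompose into few classes of pairwise disjoint curves (this is how Lemma \ref{crucial} bounds $\chi(\mathcal{D}_{ab}^a\cup\mathcal{D}_{ab}^b)$, together with a perfect-graph argument for the arcs $\mathcal{A}$ and McGuinness's $2^{100}$ bound for the resulting triangle-free family). The $(k+1)$-clique is then produced geometrically, not by re-applying the theorem to a family crossing one curve: one passes to a distance layer $\mathcal{F}^d$ from $C_1$ (Lemma \ref{distance}), uses the ordered-graph Lemma \ref{sequence} to find intersecting $C_a,C_b$ with $\mathcal{F}^d(a,b)$ of huge chromatic number, isolates via Lemma \ref{crucial} a subfamily $\mathcal{D}$ in the region between $C_a$ and $C_b$ that avoids $C_a$, $C_b$ and all curves meeting them, builds type 1/2/3 configurations inside $\mathcal{D}$ (Lemmas \ref{two} and \ref{three}), and finally observes that a shortest path from $C_1$ to the short curve $C_q$ of the configuration must enter through a curve $C_{p_{d-1}}$ with $p_{d-1}<a$ or $p_{d-1}>b$, which by $x$-monotonicity and the condition $x(C_q)\leq x(\mathcal{K}_1\cup\mathcal{K}_2)$ is forced to cross an entire $k$-clique of the configuration. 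Your guess that $121$ comes from a base-case coloring of triangle-free right-flag graphs is only loosely right (the $2^{100}$ enters inside Lemma \ref{crucial}); the rest of the additive constant is accumulated across the configuration lemmas. As it stands, your submission has a genuine gap at its central step and does not constitute a proof.
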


\begin{corollary}
Let $\mathcal{F}$ be a simple family of $x$-monotone curves, such that each member intersects the $y$-axis.  If $\chi(\mathcal{F}) > 2^{(5^{k + 1} - 121)/2}$, then $\mathcal{F}$ contains $k$ pairwise crossing members.
\end{corollary}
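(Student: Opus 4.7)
The plan is to reduce the corollary directly to Theorem \ref{flag} via the right-flag/left-flag decomposition already introduced in the excerpt. First, given $\mathcal{F}$, I would split each curve $C\in\mathcal{F}$ at its intersection with the $y$-axis into its right-flag part $C_1$ and its left-flag part $C_2$, producing families $\mathcal{F}_1$ and $\mathcal{F}_2$. Since any proper coloring of $\mathcal{F}_1$ and $\mathcal{F}_2$ combine by taking ordered pairs of colors, we have $\chi(\mathcal{F}) \leq \chi(\mathcal{F}_1)\chi(\mathcal{F}_2)$. Thus the hypothesis $\chi(\mathcal{F}) > 2^{(5^{k+1}-121)/2}$ forces at least one of the factors to exceed $2^{(5^{k+1}-121)/4}$; without loss of generality, this is $\chi(\mathcal{F}_1)$, since if instead $\chi(\mathcal{F}_2)$ is large we may reflect the plane across the $y$-axis, turning $\mathcal{F}_2$ into a simple family of $x$-monotone right-flag curves with the same intersection graph.

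Next I would apply Theorem \ref{flag} to $\mathcal{F}_1$ to obtain right-flags $C^{(1)}_1,\dots,C^{(k)}_1\in\mathcal{F}_1$ that pairwise cross. Let $C^{(1)},\dots,C^{(k)}\in\mathcal{F}$ be the parent curves from which these flags were extracted. For any $i\neq j$, the point witnessing the crossing of $C^{(i)}_1$ and $C^{(j)}_1$ lies on both $C^{(i)}$ and $C^{(j)}$, so these parent curves share at least one interior point. Because $\mathcal{F}$ is simple, any two of its members meet in at most one point and that meeting is a proper crossing, so $C^{(i)}$ and $C^{(j)}$ in fact cross. Hence $C^{(1)},\dots,C^{(k)}$ are $k$ pairwise crossing members of $\mathcal{F}$, as required.

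There is no real obstacle here: the argument is entirely the multiplicative product-coloring step (which the excerpt flags explicitly just before Theorem \ref{flag}), together with the observation that crossings of sub-arcs lift to crossings of the original curves. The only point that deserves a sentence of care is verifying that ``pairwise crossing'' (not just ``pairwise intersecting'') is preserved when passing from $\mathcal{F}_1$ back to $\mathcal{F}$, which is immediate from the simplicity hypothesis.
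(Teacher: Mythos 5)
Your proof is correct and is exactly the argument the paper has in mind: the right-flag/left-flag split with the product-coloring bound $\chi(\mathcal{F}) \leq \chi(\mathcal{F}_1)\chi(\mathcal{F}_2)$ stated just before Theorem \ref{flag}, followed by applying that theorem to whichever factor exceeds $2^{(5^{k+1}-121)/4}$ (reflecting for the left-flag case) and lifting the crossings back to the parent curves. The paper leaves the corollary unproved as immediate, and your write-up supplies precisely that intended reduction, including the minor point that simplicity makes intersections of the parent curves proper crossings.
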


The rest of this paper is devoted to proving Theorem \ref{flag}.  Given a simple family $\mathcal{F} = \{C_1,C_2,...,C_n\}$ of $n$ $x$-monotone right-flag curves, we can assume that no two curves share a point on the $y$-axis and the curves are ordered from bottom to top.  We let $G(\mathcal{F})$ be the intersection graph of $\mathcal{F}$ such that vertex $i \in V(G(\mathcal{F}))$ corresponds to the curve $C_i$.  We can assume that $G(\mathcal{F})$ is connected.  For a given curve $C_i \in \mathcal{F}$, we say that $C_i$ is at \emph{distance} $d$ from $C_1 \in \mathcal{F}$, if the shortest path from vertex 1 to $i$ in $G(\mathcal{F})$ has length $d$.  We call the sequence of curves $C_{i_1},C_{i_2},...,C_{i_p}$ a \emph{path} if the corresponding vertices in $G(\mathcal{F})$ form a path, that is, the curve $C_{i_j}$ intersects $C_{i_{j + 1}}$ for $j = 1,2,...,p-1$.
For $i < j$, we say that $C_i$ \emph{lies below} $C_j$, and $C_j$ \emph{lies above} $C_i$.  We denote $x(C_i)$ to be the $x$-coordinate of the right endpoint of the curve $C_i$.  Given a subset of curves $\mathcal{K}\subset \mathcal{F}$, we denote

$$x(\mathcal{K}) = \min\limits_{C \in \mathcal{K}}(x(C)).$$

For any subset $I \subset \mathbb{R}$, we let $\mathcal{F}(I) = \{C_i\in \mathcal{F}: i \in I\}$.  If $I$ is an interval, we will shorten $\mathcal{F}((i,j))$ to $\mathcal{F}(i,j)$, $\mathcal{F}([i,j])$ to $\mathcal{F}[i,j]$, $\mathcal{F}((i,j])$ to $\mathcal{F}(i,j]$, and $\mathcal{F}([i,j))$ to $\mathcal{F}[i,j)$.

For $\alpha \geq 0$, a finite sequence $\{r_i\}_{i = 0}^m$ of integers is called an $\alpha$\emph{-sequence of $\mathcal{F}$} if for $r_0 = \min\{i: C_i \in \mathcal{F}\}$ and $r_m = \max\{i: C_i\in \mathcal{F}\}$, the subsets $\mathcal{F}[r_0,r_1], \mathcal{F}(r_1,r_2],...,\mathcal{F}(r_{m-1},r_m]$ satisfy

$$\chi(\mathcal{F}[r_0,r_1]) = \chi(\mathcal{F}(r_1,r_2]) = \cdots = \chi(\mathcal{F}(r_{m-2},r_{m-1}]) =  \alpha$$

\noindent and

$$\chi(\mathcal{F}(r_{m-1},r_m]) \leq \alpha.$$

\medskip

\noindent \textbf{Organization.}  In the next section, we will prove several combinatorial lemmas on ordered graphs, that will be used repeatedly throughout the paper.  In Section 4, we will prove several lemmas based on the assumption that Theorem \ref{flag} holds for $k < k'$.  Then in Section 5, we will prove Theorem~\ref{flag} by induction on $k$.

\section{Combinatorial coloring lemmas}

We will make use of the following lemmas.  The first lemma is on ordered graphs $G = ([n],E)$, whose proof can be found in \cite{seanL}.  For sake of completeness, we shall add the proof.  Just as before, for any interval $I\subset \mathbb{R}$, we denote $G(I)\subset G$ to be the subgraph induced by vertices $V(G)\cap I$.

\begin{lemma}
\label{sequence}
Given an ordered graph $G = ([n],E)$, let $a,b\geq 0$ and suppose that $\chi(G) > 2^{a + b + 1}$.  Then there exists an induced subgraph $H \subset G$ where $\chi(H) > 2^a$, and for all $uv \in E(H)$ we have $\chi(G(u,v)) \geq 2^b$.
\end{lemma}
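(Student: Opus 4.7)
The plan is to partition $V(G) = [n]$ via a $2^b$-sequence (as defined in Section 2, but applied to the ordered graph $G$ in place of a family of curves) and then to pass to a carefully chosen induced subgraph of $G$. Set $r_0 = 1$ and inductively let $r_i$ be the largest index with $\chi(G(r_{i-1}, r_i]) \leq 2^b$; since adding a single vertex raises the chromatic number by at most one, we have $\chi(G(r_{i-1}, r_i]) = 2^b$ for every $i$ except possibly the last. Denote the resulting blocks by $I_1 = [r_0, r_1]$ and $I_j = (r_{j-1}, r_j]$ for $j \geq 2$, with $I_m$ the last block.

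First I would two-color the blocks by parity. Setting $V_{\text{odd}} = \bigcup_{j \text{ odd}} I_j$ and $V_{\text{even}} = \bigcup_{j \text{ even}} I_j$, the standard bound $\chi(G) \leq \chi(G(V_{\text{odd}})) + \chi(G(V_{\text{even}}))$ together with the hypothesis $\chi(G) > 2^{a+b+1}$ forces one of them, say $V_{\text{odd}}$, to satisfy $\chi(G(V_{\text{odd}})) > 2^{a+b}$. The payoff of restricting to odd blocks is that any two distinct odd blocks are separated by at least one entire even block.

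Next I would refine the coloring inside each odd block. For every odd $j$, fix a proper coloring of $G(I_j)$ with colors $\{1, \ldots, 2^b\}$, and for each $c \in \{1, \ldots, 2^b\}$ let $V^c$ be the union of the color-$c$ classes over all odd blocks. Since $V_{\text{odd}} = V^1 \cup \cdots \cup V^{2^b}$, subadditivity of $\chi$ yields some $c$ with $\chi(G(V^c)) > 2^a$, for otherwise the sum would be at most $2^b \cdot 2^a = 2^{a+b}$, contradicting the bound on $\chi(G(V_{\text{odd}}))$. Take $H = G(V^c)$; then $\chi(H) > 2^a$.

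It remains to verify the edge condition. An edge $uv$ of $H$ with both endpoints in a single odd block $I_j$ cannot exist, because $V^c \cap I_j$ is by construction a color class of a proper coloring of $G(I_j)$ and hence independent in $G$. Consequently every edge $uv$ of $H$ with $u < v$ has $u \in I_j$ and $v \in I_{j'}$ for odd $j < j'$, so $j' \geq j + 2$. The even block $I_{j+1}$ then lies strictly between $u$ and $v$, and since $j + 1 \leq j' - 1 < m$ it is not the last block of the $2^b$-sequence, which gives $\chi(G(I_{j+1})) = 2^b$ and therefore $\chi(G(u,v)) \geq 2^b$, as required. The only real subtlety is recognizing that a two-level decomposition is needed here — inter-block separation by parity to handle edges that span multiple blocks, and an intra-block recoloring to kill the potentially short edges inside a single block — after which the edge verification is essentially forced.
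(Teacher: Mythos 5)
Your proof is correct and takes essentially the same approach as the paper: partition $[n]$ into the blocks of a $2^b$-sequence, combine a parity split of the blocks with proper $2^b$-colorings inside the blocks to extract $H$, and observe that every edge of $H$ must skip over an entire intermediate block of chromatic number $2^b$. The only difference is the order of the two pigeonhole steps (you split by parity before selecting a color class, the paper selects the color class first), which is immaterial.
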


\begin{proof}
Let $\{r_i\}_{i = 0}^m$ be a $2^b$-sequence of $V(G)$.  Then for $r_0 = 1$ and $r_m = n$, we have subgraphs $G[r_0,r_1], G(r_1,r_2],...,G(r_{m-1},r_m]$, such that

$$\chi(G[r_0,r_1]) = \chi(G(r_1,r_2]) = \cdots = \chi(G(r_{m-2},r_{m-1}]) =  2^b$$

\noindent and

$$\chi(G(r_{m-1},r_m]) \leq 2^b.$$

\noindent For each of these subgraphs, we will properly color its vertices with colors, say, $1,2,...,2^b$.  Since $\chi(G) > 2^{a + b + 1}$, there exists a color class for which the vertices of this color induce a subgraph with chromatic number at least $2^{a  + 1}$.  Let $G'$ be such a subgraph, and we define subgraphs $H_1,H_2\subset G'$ such that

$$H_1 = G'[r_0,r_1]\cup G'(r_2,r_3]\cup \cdots \hspace{1cm}\textnormal{and}\hspace{1cm} H_2 = G'(r_1,r_2]\cup G'(r_3,r_4] \cup \cdots$$

Since $V(H_1)\cup V(H_2) = V(G')$, either $\chi(H_1) > 2^a$ or $\chi(H_2) > 2^a$.  Without loss of generality, we can assume $\chi(H_1) > 2^a$ holds, and set $H = H_1$.  Now for any $uv \in E(H)$, there exists integers $i,j$ such that for $0 \leq i < j$, we have $u \in V(G'(r_{2i},r_{2i + 1}])$ and $v\in V(G'(r_{2j},r_{2j + 1}])$.  This implies $G(r_{2i + 1},r_{2i + 2}] \subset G(u,v)$ and

$$\chi(G(u,v)) \geq \chi(G(r_{2i + 1},r_{2i + 2}] ) = 2^b.$$

\noindent This completes the proof of the lemma.
\end{proof}

\noindent Recall that the distance between two vertices $u,v \in V(G)$ in a graph $G$, is the length of the shortest path from $u$ to $v$.

\begin{lemma}
\label{distance}
Let $G$ be a graph and let $v \in V(G)$.  Suppose $G^0,G^1,G^2,...$ are the subgraphs induced by vertices at distance $0,1,2,...$ respectively from $v$.  Then for some $d$, $\chi(G_d) \geq \chi(G)/2$.
\end{lemma}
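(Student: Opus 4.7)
The plan is to use a standard BFS-layering argument. Partition $V(G)$ into layers $V^0, V^1, V^2, \ldots$ where $V^d$ is the set of vertices at distance exactly $d$ from $v$; so $G^d$ is the subgraph induced on $V^d$. The essential observation, which I would verify first, is that every edge of $G$ connects either two vertices in the same layer $V^d$ or two vertices in consecutive layers $V^d, V^{d+1}$. This is immediate from the definition of graph distance: if $uv \in E(G)$ with $u \in V^d$, then $v$ is reachable from $v$ via $u$ in $d+1$ steps, and reachable to $u$ in at most one step from $v$, so $v \in V^{d-1} \cup V^d \cup V^{d+1}$.

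Next, I would split the layers by parity. Let $H_{\mathrm{even}}$ be the subgraph of $G$ induced by $\bigcup_d V^{2d}$ and $H_{\mathrm{odd}}$ the subgraph induced by $\bigcup_d V^{2d+1}$. By the observation above, $H_{\mathrm{even}}$ has no edges between distinct layers $V^{2d}$ and $V^{2d'}$ (since these differ by at least $2$), so $H_{\mathrm{even}}$ is literally a disjoint union of the graphs $G^{2d}$ and therefore satisfies $\chi(H_{\mathrm{even}}) = \max_d \chi(G^{2d})$. Likewise $\chi(H_{\mathrm{odd}}) = \max_d \chi(G^{2d+1})$.

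Finally, I combine proper colorings of $H_{\mathrm{even}}$ and $H_{\mathrm{odd}}$ using disjoint palettes. Every edge of $G$ is either inside $H_{\mathrm{even}}$, inside $H_{\mathrm{odd}}$, or between them; in all three cases the two endpoints receive different colors. Hence $\chi(G) \leq \chi(H_{\mathrm{even}}) + \chi(H_{\mathrm{odd}})$, which forces $\max\bigl(\chi(H_{\mathrm{even}}), \chi(H_{\mathrm{odd}})\bigr) \geq \chi(G)/2$, and therefore some $G^d$ has $\chi(G^d) \geq \chi(G)/2$, as desired.

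There is essentially no obstacle here; the only point that deserves care is the verification that edges within $H_{\mathrm{even}}$ (and within $H_{\mathrm{odd}}$) are confined to a single layer, since that is what lets one reduce the chromatic number of the even/odd union to the maximum over layers rather than the sum.
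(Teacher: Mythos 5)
Your proof is correct and is essentially the same argument the paper gives, only written out in full: the paper's proof consists of the single observation that no vertex of $G^i$ is adjacent to a vertex of $G^j$ when $|i-j|>1$, followed by an appeal to pigeonhole, which is exactly your parity split into $H_{\mathrm{even}}$ and $H_{\mathrm{odd}}$ with disjoint palettes. (Minor notational slip only: in your first paragraph you reuse the letter $v$ both for the root and for an endpoint of the edge $uv$, which should be renamed, but the reasoning is sound.)
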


\begin{proof}
For $0 \leq i < j$, if $|i - j| > 1$, then no vertex in $G^i$ is adjacent to a vertex in $G^j$.  By the Pigeonhole Principle, the statement follows.

\end{proof}

\section{Using the induction hypothesis}
The proof of Theorem \ref{flag} will be given in Section 5, and is done by induction on $k$.  By setting $\lambda_k = 2^{(5^{k+2} - 121)/4}$, we will show that if $\mathcal{F}$ is a simple family of $x$-monotone right-flag curves with $\chi(\mathcal{F})> 2^{\lambda_k}$, then $\mathcal{F}$ contains $k$ pairwise crossing members.  In the following two subsections, we will assume that the statement holds for fixed $k < k'$, and show that if $\chi(\mathcal{F})> 2^{5\lambda_k + 120}$, $\mathcal{F}$ must contain either $k+1$ pairwise crossing members, or a special subconfiguration.  Note that $\lambda_k$ satisfies $\lambda_k > \log(2k)$ for all $k$.

\subsection{Key lemma}
Let $\mathcal{F} = \{C_1,C_2,...,C_n\}$ be a simple family of $n$ $x$-monotone right-flag curves, such that no $k+1$ members pairwise cross.  Suppose that curves $C_a$ and $C_b$ intersect, for $a < b$.  Let

\begin{eqnarray*}
\mathcal{I}_a & =& \{ C_i \in \mathcal{F}(a,b): \textnormal{$C_i$ intersects $C_a$}\},\\\\
\mathcal{D}_a & =& \{C_i \in \mathcal{F}(a,b): \textnormal{$C_i$ does not intersect $C_a$}\}.
\end{eqnarray*}

\noindent We define $\mathcal{I}_b$ and $\mathcal{D}_b$ similarly and set $\mathcal{D}_{ab} = \mathcal{D}_a\cap \mathcal{D}_b \subset \mathcal{F}(a,b)$.  Now we define three subsets of $\mathcal{D}_{ab}$ as follows:

\begin{eqnarray*}
\mathcal{D}_{ab}^a & =& \{C_i \in \mathcal{D}_{ab}: \textnormal{$\exists C_j \in \mathcal{I}_a$ that intersects $C_i$}\},\\\\
\mathcal{D}_{ab}^b & =& \{C_i \in \mathcal{D}_{ab}: \textnormal{$\exists C_j \in \mathcal{I}_b$ that intersects $C_i$}\},\\\\
\mathcal{D} &= &\mathcal{D}_{ab}\setminus(\mathcal{D}_{ab}^a\cup \mathcal{D}_{ab}^b).
\end{eqnarray*}

\noindent We now prove the following key lemma.

\begin{lemma}
\label{crucial}
 $\chi(\mathcal{D}_{ab}^a\cup \mathcal{D}_{ab}^b) \leq k\cdot 2^{2\lambda_k + 102}$.  Hence, $\chi(\mathcal{D}) \geq \chi(\mathcal{F}(a,b)) - 2^{\lambda_k+1} -  k\cdot 2^{2\lambda_k + 102}$.

\end{lemma}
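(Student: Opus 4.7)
The plan is to prove the first inequality by separately bounding $\chi(\mathcal{D}_{ab}^a)$ and $\chi(\mathcal{D}_{ab}^b)$; by the symmetry exchanging the roles of $C_a$ and $C_b$, it suffices to show $\chi(\mathcal{D}_{ab}^a) \leq k \cdot 2^{2\lambda_k + 101}$. The ``Hence'' part then follows from the first inequality by subadditivity of the chromatic number applied to the cover $\mathcal{F}(a,b) = \mathcal{I}_a \cup \mathcal{I}_b \cup \mathcal{D}_{ab}^a \cup \mathcal{D}_{ab}^b \cup \mathcal{D}$, together with the inductive bound $\chi(\mathcal{I}_a) + \chi(\mathcal{I}_b) \leq 2\cdot 2^{\lambda_{k-1}} \leq 2^{\lambda_k+1}$ justified below.

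Two applications of the inductive hypothesis supply the local estimates. Since $\mathcal{I}_a$ is itself a simple family of $x$-monotone right-flag curves, and since a $k$-clique in $\mathcal{I}_a$ together with $C_a$ (which meets every member of $\mathcal{I}_a$) would form a $(k+1)$-clique in $\mathcal{F}$ contradicting the hypothesis, the induction on Theorem \ref{flag} yields $\chi(\mathcal{I}_a) \leq 2^{\lambda_{k-1}}$. For each $C_j \in \mathcal{I}_a$, defining $\mathcal{D}_j := \{C_i \in \mathcal{D}_{ab}^a : C_i \cap C_j \neq \emptyset\}$ and running the same argument with $C_j$ in place of $C_a$ gives $\chi(\mathcal{D}_j) \leq 2^{\lambda_{k-1}}$.

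The core of the argument is to combine the coloring of $\mathcal{I}_a$ with the fiber colorings into a proper coloring of $\mathcal{D}_{ab}^a = \bigcup_{C_j \in \mathcal{I}_a} \mathcal{D}_j$ using at most $k \cdot 2^{2\lambda_k + 101}$ colors. The key geometric fact is that every curve in $\mathcal{I}_a \cup \mathcal{D}_{ab}^a$ is confined to the bounded region $R$ enclosed by the initial arcs of $C_a$ and $C_b$ up to their intersection point together with the $y$-axis segment between their starting points; within $R$, each $C_j \in \mathcal{I}_a$ is an arc from the $y$-axis to $C_a$, and pairwise non-intersecting members of $\mathcal{I}_a$ form a linear stacking inside $R$. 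I would fix a proper $\chi(\mathcal{I}_a)$-coloring of $\mathcal{I}_a$, assign to each $C_i \in \mathcal{D}_{ab}^a$ a parent $\pi(C_i) \in \mathcal{I}_a$ (for instance, the first such curve encountered in left-to-right traversal along $C_i$), and form the candidate coloring sending $C_i$ to the tuple (color of $\pi(C_i)$, color of $C_i$ in the fiber coloring of $\mathcal{D}_{\pi(C_i)}$). The only conflicts this candidate misses are intersecting pairs $C_i, C_{i'} \in \mathcal{D}_{ab}^a$ whose distinct parents lie in the same $\mathcal{I}_a$-color class (hence are pairwise disjoint arcs, linearly stacked in $R$) and whose fiber colors happen to coincide; these ``cross-fiber'' conflicts are to be absorbed by $k$ additional correction layers.

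The main obstacle is quantifying the cost of the correction layers so that the final exponent is exactly $2\lambda_k + 101$ and the leading factor is $k$. The plan is to apply Lemma \ref{sequence} to the inherited $y$-axis index ordering on the conflict graph: if the chromatic number of the remaining uncolored graph exceeds the per-layer budget, the lemma produces a long induced chain of intersections separated by intervals of high chromatic number, and by the simplicity and $x$-monotone structure inside $R$ this chain must force $k$ pairwise crossing members of $\mathcal{D}_{ab}^a$ which, together with a common witness from $\mathcal{I}_a$ or one of $\{C_a, C_b\}$, yield a $(k+1)$-clique in $\mathcal{F}$, contradicting the hypothesis. This contradiction caps the number of correction layers at $k$, explaining the factor $k$ in the final bound; each of the two induction steps contributes one factor of $2^{\lambda_{k-1}}$, and the correction layers together with the passage from $2\lambda_{k-1}$ to $2\lambda_k$ absorb the remaining $2^{101}$ budget.
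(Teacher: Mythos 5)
Your outline of the easy parts is fine: the symmetry reduction to $\mathcal{D}_{ab}^a$, the bound $\chi(\mathcal{I}_a),\chi(\mathcal{I}_b)\leq 2^{\lambda_k}$ from the induction hypothesis (a $k$-clique there plus $C_a$ or $C_b$ would give $k+1$ pairwise crossing curves), the bound on each fiber $\mathcal{D}_j$ for $C_j\in\mathcal{I}_a$, and the subadditivity step giving the ``Hence'' part all match the paper. But the heart of the lemma is exactly the part you defer to ``$k$ additional correction layers,'' and there is a genuine gap there. The product coloring (parent color, fiber color) fails precisely for intersecting pairs $C_i,C_{i'}\in\mathcal{D}_{ab}^a$ whose parents are distinct disjoint members of $\mathcal{I}_a$, and your proposed contradiction mechanism for bounding these conflicts does not work: curves of $\mathcal{D}_{ab}^a$ are by definition disjoint from $C_a$ and $C_b$, so neither can serve as the ``common witness,'' and $k$ pairwise crossing members of $\mathcal{D}_{ab}^a$ need not have any common neighbor in $\mathcal{I}_a$ -- each may hook onto a different, pairwise disjoint arc. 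Indeed $\mathcal{D}_{ab}^a$ can perfectly well contain $k$ pairwise crossing members without violating the hypothesis; that is why the lemma's bound is $k\cdot 2^{2\lambda_k+102}$ rather than $2^{\lambda_k}$, and why no appeal of the form ``high chromatic number of the conflict graph forces a $(k+1)$-clique via a common witness'' can close the argument. The claim that the number of correction layers is at most $k$, and the final budget accounting, are asserted rather than proved.

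For comparison, the paper resolves exactly this difficulty by surgery on the curves rather than by correcting a product coloring. It truncates each $C_j\in\mathcal{I}_a$ to the arc $A_j$ ending at $C_j\cap C_a$, uses perfection of the incomparability graph to split $\mathcal{A}$ into $k$ classes of pairwise disjoint arcs (the source of the factor $k$), restricts to one class $\mathcal{A}_t$ and to curves meeting $\mathcal{A}_t$ from below (factors $2\cdot 2$), and then performs two rounds of modification: first replacing each curve by its initial arc $L_i$ up to the topmost arc $A_{p_{u(i)}}$ it meets, then by $U_i=B_i'\cup T_i$ keyed to the bottommost arc $A_{p_{l(i)}}$. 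The point of each surgery is that a $k$-clique in the modified family would force a common arc witness (the $A_{p_i}$ with $i=\min u(j)$, respectively $i=\max l(j)$, over the clique), hence $k+1$ pairwise crossing members of $\mathcal{F}$; this lets the induction hypothesis decompose each modified family into at most $2^{\lambda_k}$ disjoint classes (the two factors $2^{\lambda_k}$). What survives is shown to be triangle-free, and McGuinness's theorem supplies the final factor $2^{100}$. This two-stage truncation and the reduction to a triangle-free family is the missing idea; without it, or a concrete substitute, your proposal does not establish the bound $k\cdot 2^{2\lambda_k+102}$.
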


\begin{proof}
Without loss of generality, we can assume that

\begin{equation}
\chi(\mathcal{D}_{ab}^a) \geq \frac{\chi(\mathcal{D}_{ab}^a\cup \mathcal{D}_{ab}^b) }{2},
\end{equation}

\noindent since otherwise a similar argument will follow if $\chi(\mathcal{D}_{ab}^b) \geq \chi(\mathcal{D}_{ab}^a\cup \mathcal{D}_{ab}^b)/2$.  Let $\mathcal{D}_{ab}^a = \{C_{r_1},C_{r_2},...,C_{r_{m_1}}\}$ where $a < r_1 < r_2 < \cdots < r_{m_1} < b$.  For each curve $C_i \in \mathcal{I}_a$, we define $A_{i}$ to be the arc along the curve $C_{i}$, from the left endpoint of $C_{i}$ to the intersection point $C_{i}\cap C_a$.  Set $\mathcal{A} = \{A_i: \mathcal{C}_i \in \mathcal{I}_a\}$.  Notice that the intersection graph of $\mathcal{A}$ is an incomparability graph, which is a perfect graph (see \cite{fox2}).  Since there are no $k+1$ pairwise crossing arcs in $\mathcal{A}$, we can decompose the members in $\mathcal{A}$ into $k$ parts $\mathcal{A} = \mathcal{A}_1\cup \mathcal{A}_1\cup \cdots\cup \mathcal{A}_k$, such that the arcs in $\mathcal{A}_i$ are pairwise disjoint.  Then for $i = 1,2,...,k$, we define

$$\mathcal{S}_i = \{C_j \in \mathcal{D}_{ab}^a: \textnormal{$C_{j}$ intersects an arc from $\mathcal{A}_{i}$}\}.$$

\noindent Since $\mathcal{D}_{ab}^a = \mathcal{S}_1 \cup \mathcal{S}_2 \cup \cdots \cup \mathcal{S}_k$, there exists a $t \in \{1,2,...,k\}$ such that

\begin{equation}
\chi(\mathcal{S}_t) \geq \frac{\chi(\mathcal{D}_{ab}^a)}{k}.
\end{equation}

\noindent Therefore, let $\mathcal{A}_{t}  =  \{A_{p_1},A_{p_2},...,A_{p_{m_2}}\}$.  Notice that each curve $C_{i} \in S_t$ intersects the members in $\mathcal{A}_t$ that lies either above or below $C_{i}$ (but not both since $\mathcal{F}$ is simple).  Moreover, $C_{i}$ intersects the members in $\mathcal{A}_t$ in either increasing or decreasing order.  Let $\mathcal{S}_t^{1}$ ($\mathcal{S}_t^2$) be the curves in $\mathcal{S}_t$ that intersects a member in $\mathcal{A}_t$ that lies above (below) it.  Again, without loss of generality we will assume that

\begin{equation}
\chi(\mathcal{S}_t^1) \geq   \frac{\chi(\mathcal{S}_t)}{2},
\end{equation}

\noindent since a symmetric argument will hold if $\chi(\mathcal{S}_t^2) \geq   \chi(\mathcal{S}_t)/2$.  For each curve $C_{i} \in \mathcal{S}^1_{t}$, we define

\begin{eqnarray*}
u(i) & = & \max\{j: \textnormal{arc $A_{p_j}\in \mathcal{A}_t$ intersects $C_{i}$}\},\\\\
l(i) & = & \min\{j: \textnormal{arc $A_{p_j} \in \mathcal{A}_t$ intersects $C_{i}$}\}.
\end{eqnarray*}

\noindent See Figure \ref{example} for a small example.  Then for each curve $C_{i} \in  \mathcal{S}^1_{t}$, we define the curve $L_{i}$ to be the arc along $C_{i}$, joining the left endpoint of $C_{i}$ and the point $C_{i}\cap A_{p_{u(i)}}$.  See Figure \ref{li}.  Now we set

 \begin{figure}
  \centering
\subfigure[In this example, $l(i) = 2$ and $u(i) = 3$.]{\label{example}\includegraphics[width=0.31\textwidth]{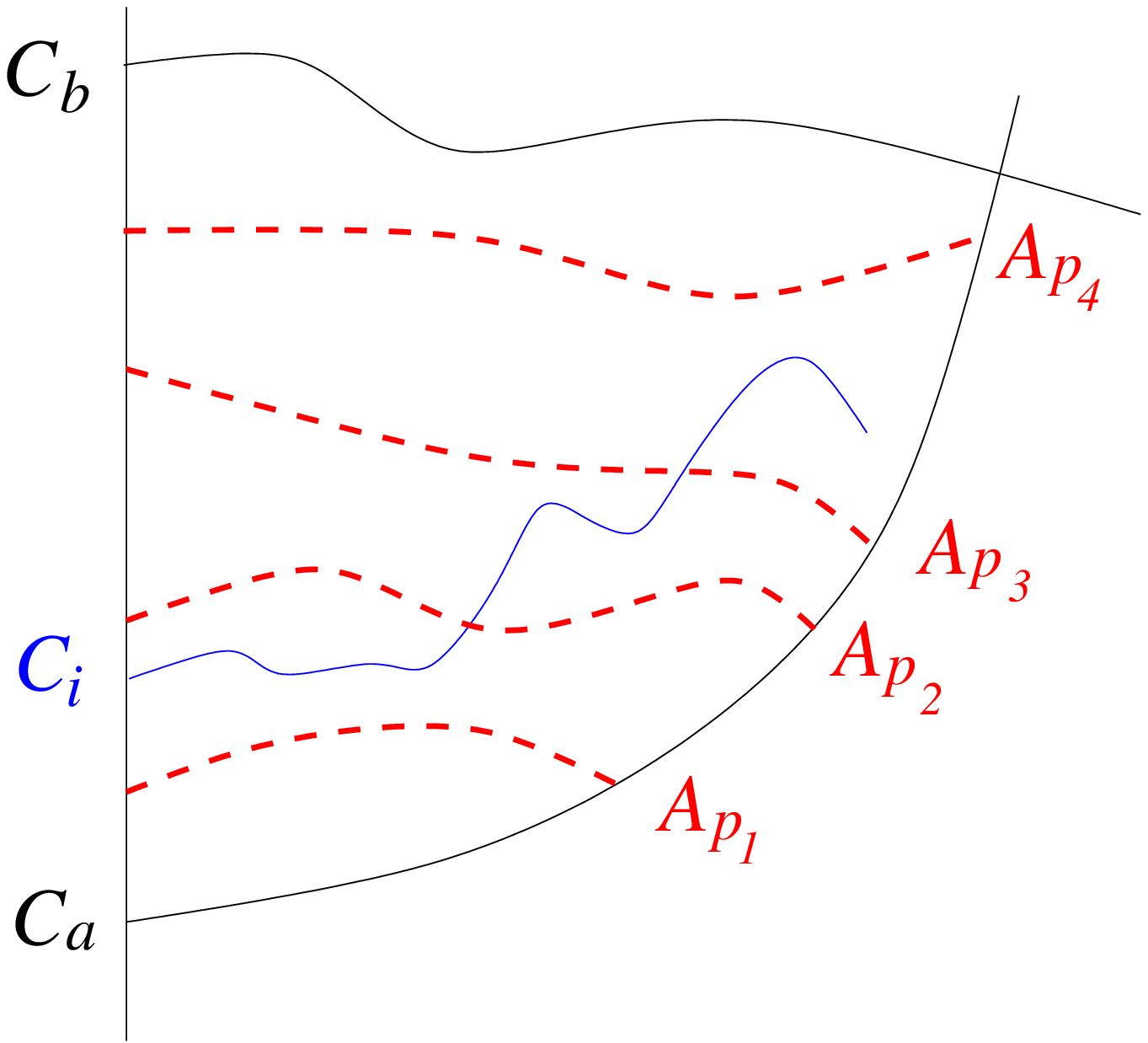}}\hspace{2cm}
\subfigure[$L_i$ drawn thick.]{\label{li}\includegraphics[width=0.31\textwidth]{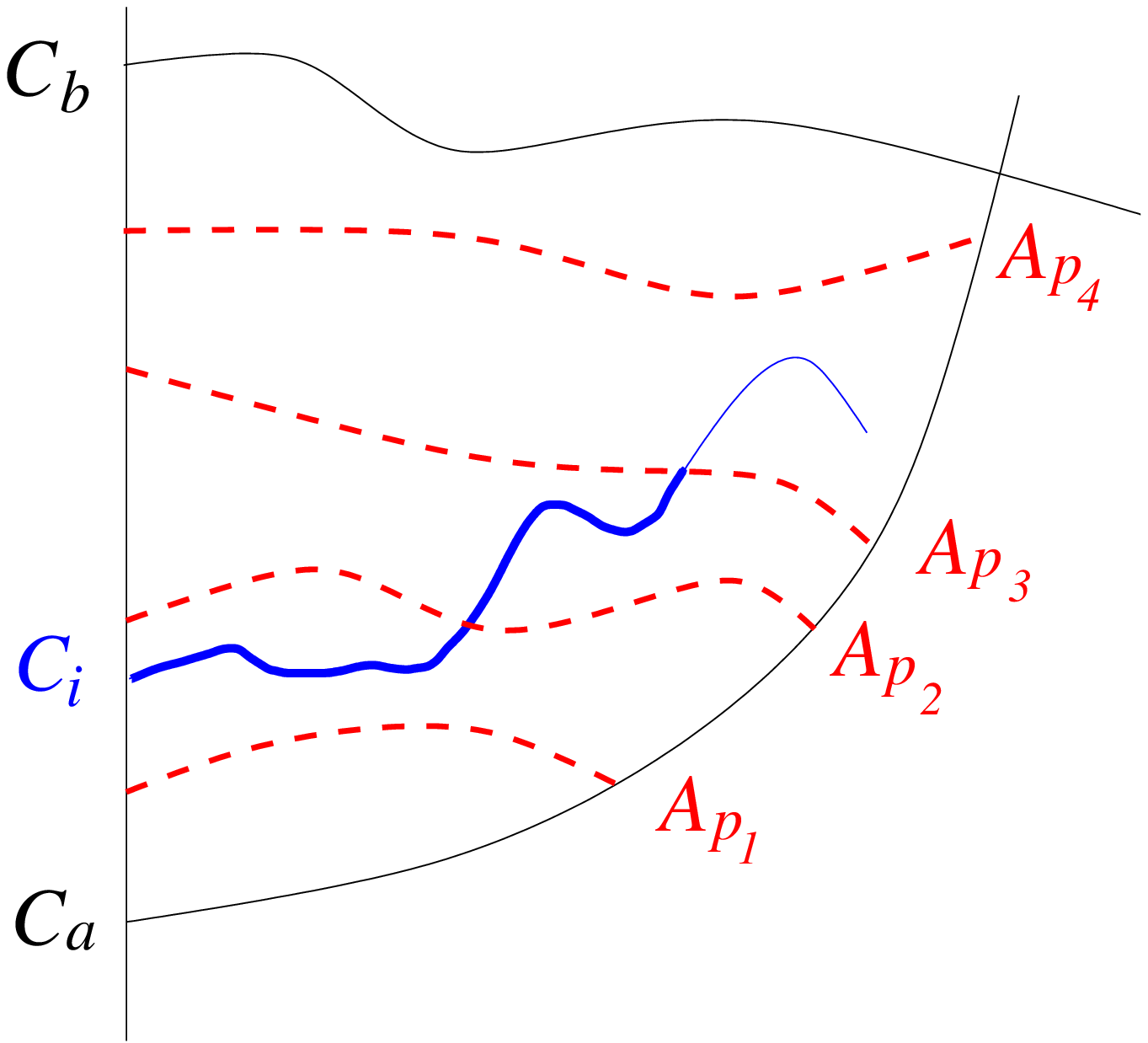}}
                        \caption{Curves in $\mathcal{S}_t^1$.}
  \label{lu}
\end{figure}

 $$\mathcal{L} = \{L_i: C_i \in \mathcal{S}_t^1\}.$$

\noindent  Notice that $\mathcal{L}$ does not contain $k$ pairwise crossing members.  Indeed, otherwise these $k$ curves $\mathcal{K}\subset \mathcal{L}$ would all intersect $A_{p_{i}}$ where

$$i =  \min\limits_{C_j \in \mathcal{K}} u(j),$$

\noindent  creating $k+1$ pairwise crossing curves in $\mathcal{F}$.  Therefore we can decompose $\mathcal{L} = \mathcal{L}_1\cup \mathcal{L}_2\cup \cdots \cup \mathcal{L}_{w}$ into $w$ parts, such that $w \leq 2^{\lambda_k}$, and the set of curves in $\mathcal{L}_i$ are pairwise disjoint for $i = 1,2,...,w$.  Let $\mathcal{H}_i\subset \mathcal{F}$ be the set of (original) curves corresponding to the (modified) curves in $\mathcal{L}_i$.  Then there exists an $s \in \{1,2,...,w\}$ such that

\begin{equation}
\chi(\mathcal{H}_s) \geq \frac{\chi(\mathcal{S}_t^1) }{2^{\lambda_k}}.
\end{equation}

\noindent Now for each curve $C_i \in \mathcal{H}_s$, we will define the curve $U_i$ as follows.  Let $T_i$ be the arc along $C_i$, joining the right endpoint of $C_i$ and the point $C_i\cap A_{p_{l(i)}}$.  We define $B_i$ to be the arc along $C_i$, joining the left endpoint of $C_i$ and the point $C_i\cap A_{p_{l(i)}}$.  See Figure \ref{tbi}.  Notice that for any two curves $C_i,C_j \in \mathcal{H}_s$, $B_i$ and $B_j$ are disjoint.  We define $U_i = B'_i\cup T_i$, where $B'_i$ is the arc obtained by pushing $B_i$ upwards toward $A_{p_{l(i)}}$,  such that $B'_i$ does not introduce any new crossing points, and $B'_i$ is ``very close" to the curve $A_{p_{l(i)}}$, meaning that no curve in $\mathcal{H}_s$ has its right endpoint in the region enclosed by $B'_i, A_{p_{l(i)}}$, and the $y$-axis.  See Figure \ref{bprime}.  We do this for every curve $C_i \in \mathcal{H}_s$, to obtain the family $\mathcal{U} = \{U_i: C_i \in \mathcal{H}_s\}$, such that

\begin{enumerate}
\item $\mathcal{U}$ is a simple family of $x$-monotone right-flag curves,

 \item we do not create any new crossing pairs in $\mathcal{U}$ (we may lose some crossing pairs),

\item any curve $U_j$ that crosses $B'_i$, must cross $A_{p_{l(i)}}$.
\end{enumerate}

 \begin{figure}[h]
  \centering
\subfigure[$T_i$ drawn thick and $B_i$ drawn normal along $C_i$.]{\label{tbi}\includegraphics[width=0.31\textwidth]{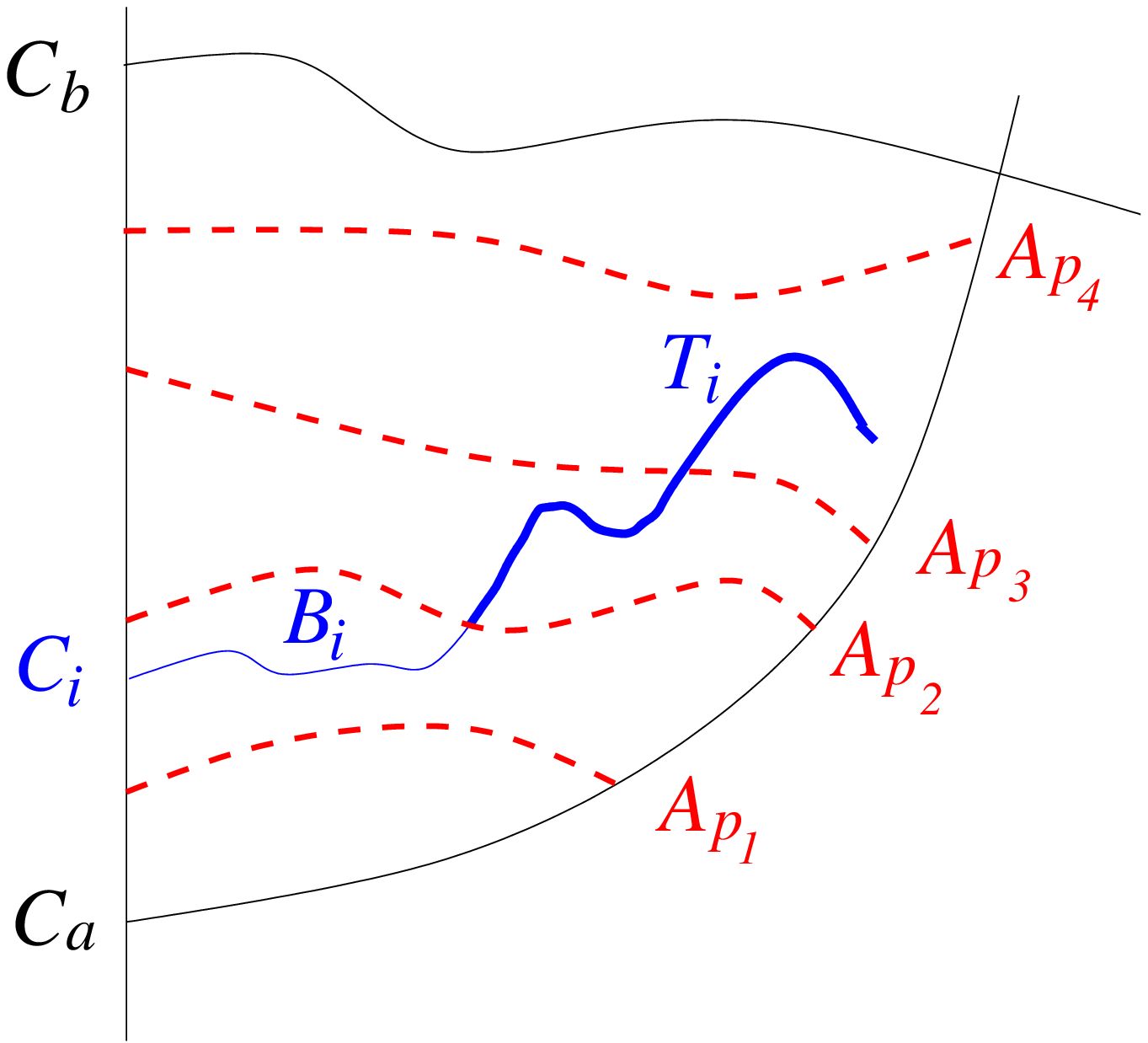}}\hspace{2cm}
\subfigure[Curve $U_i = B'_i \cup T_i$.]{\label{bprime}\includegraphics[width=0.31\textwidth]{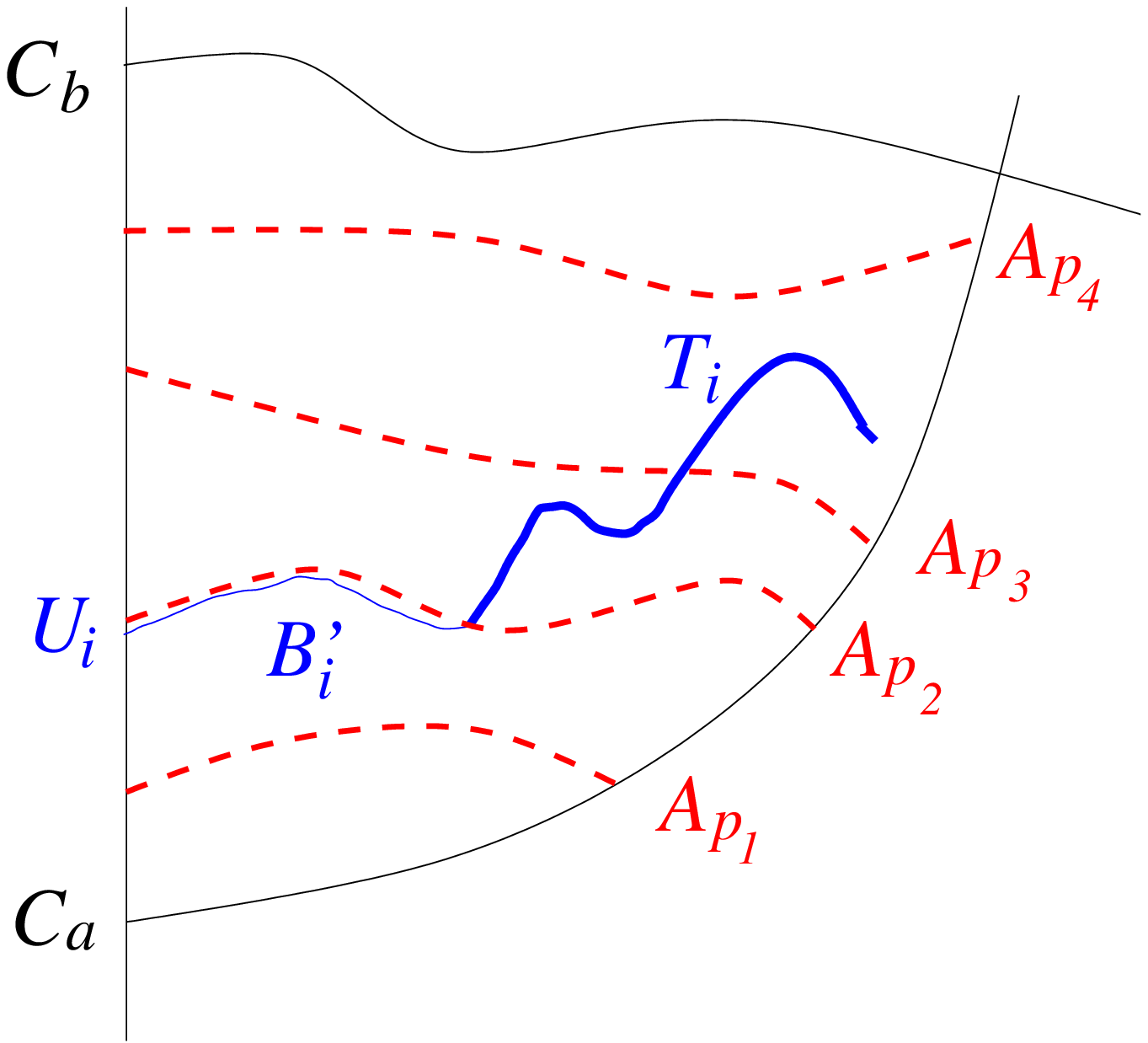}}
                        \caption{Curves in $\mathcal{S}_t^1$.}
  \label{lu}
\end{figure}

Notice that $\mathcal{U}$ does not contain $k$ pairwise crossing members.  Indeed, otherwise these $k$ curves $\mathcal{K} \subset \mathcal{U}$ would all cross $A_{p_i}$ where

$$i = \max\limits_{U_j \in \mathcal{K}}l(j),$$

\noindent creating $k+1$ pairwise crossing curves in $\mathcal{F}$.  Therefore we can decompose $\mathcal{U} = \mathcal{U}_1\cup \mathcal{U}_2\cup \cdots \cup \mathcal{U}_z$ into $z$ parts, such that $z \leq 2^{\lambda_k}$ and the curves in $\mathcal{U}_i$ are pairwise disjoint.  Let $\mathcal{C}_i\subset \mathcal{F}$ be the set of (original) curves corresponding to the (modified) curves in $\mathcal{U}_i$.  Then there exists an $h \in \{1,2,...,z\}$ such that

\begin{equation}
\chi(\mathcal{C}_h) \geq \frac{\chi(\mathcal{H}_s)}{2^{\lambda_k}}.
\end{equation}

\noindent Now we make the following observation.

\begin{observation}

There are no three pairwise crossing curves in $\mathcal{C}_h$.

\end{observation}
\begin{proof}
Suppose that the pair of curves $C_i,C_j \in \mathcal{C}_h$ intersect, for $i < j$.  Then we must have $i < p_{u(i)} < j < p_{l(j)}$ and $A_{p_{u(i) + 1}} = A_{p_{l(j)}}$.  Basically the ``top tip" of $C_i$ must intersect the ``bottom tip" of $C_j$.  See Figure \ref{example2}.  Hence if $C_i$ crosses $C_j$ and $C_k$ for $i < j < k$, then $p_{u(i)} < j < k < p_{l(j)}$, and therefore $C_j$ and $C_k$ must be disjoint.  This implies that $\mathcal{C}_h$ does not contain three pairwise crossing members.

\begin{figure}[h]
\begin{center}
\includegraphics[width=160pt]{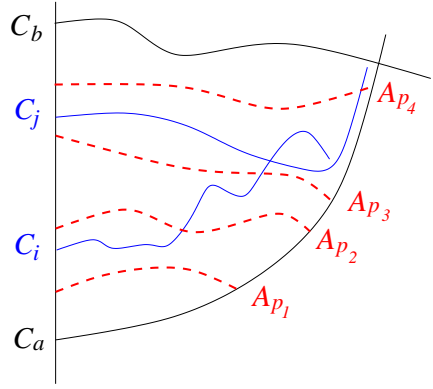}
  \caption{$C_i$ and $C_j$ cross.}
  \label{example2}
 \end{center}
\end{figure}

\end{proof}

\noindent By a result of McGuinness \cite{sean}, we know that

\begin{equation}
\chi(\mathcal{C}_h) \leq 2^{100}.
\end{equation}

\noindent  Therefore, by combining equations (1),(2),(3),(4),(5), and (6), we have

$$\chi(\mathcal{D}_{ab}^a\cup \mathcal{D}_{ab}^b) \leq  k\cdot 2^{2\lambda_k + 102}.$$

\noindent Since neither $\mathcal{I}_a$ nor $\mathcal{I}_b$ contain $k$ pairwise crossing members, we have $\chi(\mathcal{I}_a),\chi(\mathcal{I}_b) \leq 2^{\lambda_k}$.  Therefore

\begin{eqnarray*}
\chi(\mathcal{D}) & \geq &\chi(\mathcal{F}(a,b)) - \chi(\mathcal{I}_a) - \chi(\mathcal{I}_b) - \chi(\mathcal{D}^a_{ab}\cup \mathcal{D}^b_{ab})\\\\
 &\geq &\chi(\mathcal{F}(a,b)) -  2^{\lambda_k +1} -  k\cdot 2^{2\lambda_k + 102}.
\end{eqnarray*}

\end{proof}

Therefore, if there exists a curve $C_i \in \mathcal{F}$ that intersects $C_a$ (or $C_b)$ and a curve from $\mathcal{D}$, then $i < a$ or $i > b$.

\subsection{Finding special configurations}
In this section, we will show that if the chromatic number of $G(\mathcal{F})$ is sufficiently high, then certain subconfigurations must exist.  We say that the set of curves $\{C_{i_1},C_{i_2},...,C_{i_{k + 1}}\}$ forms a \emph{type 1 configuration}, if

\begin{enumerate}

\item $i_1 < i_2 < \cdots < i_{k + 1}$,

 \item the set of $k$ curves $\mathcal{K} = \{C_{i_1},C_{i_2},...,C_{i_k}\}$ pairwise intersects,

 \item $C_{i_{k + 1}}$ does not intersect any of the curves in $\mathcal{K}$, and

 \item $x(C_{i_{k + 1}}) < x(\mathcal{K})$.  See Figure \ref{type1}.
\end{enumerate}

\noindent Likewise, we say that the set of curves $\{C_{i_1},C_{i_2},...,C_{i_{k + 1}}\}$ forms a \emph{type 2 configuration}, if

\begin{enumerate}

\item $i_1 < i_2 < \cdots < i_{k + 1}$,

 \item the set of $k$ curves $\mathcal{K} = \{C_{i_2},C_{i_3},...,C_{i_{k + 1}}\}$ pairwise intersects,

 \item $C_{i_{ 1}}$ does not intersect any of the curves in $\mathcal{K}$, and

 \item $x(C_{i_{1}}) < x(\mathcal{K})$.  See Figure \ref{type2}.
\end{enumerate}

\noindent We say that the set of curves $\{C_{i_1},C_{i_2},...,C_{i_{2k + 1}}\}$ forms a \emph{type 3 configuration}, if

\begin{enumerate}
\item $i_1 < i_2 < \cdots < i_{2k+1}$,

 \item the set of $k$ curves $\mathcal{K}_1 = \{C_{i_1},...,C_{i_k}\}$ pairwise intersects,

 \item the set of $k$ curves $\mathcal{K}_2 = \{C_{i_{k+2}},C_{i_{k + 3}},...,C_{i_{2k + 1}}\}$ pairwise intersects,

\item $C_{i_{k + 1}}$ does not intersect any of the curves in $\mathcal{K}_1\cup \mathcal{K}_2$, and

\item $x(C_{i_{k + 1}}) \leq x(\mathcal{K}_1\cup \mathcal{K}_2)$.  See Figure \ref{type3}.

\end{enumerate}

 \begin{figure}
  \centering
\subfigure[Type 1 configuration.]{\label{type1}\includegraphics[width=0.3\textwidth]{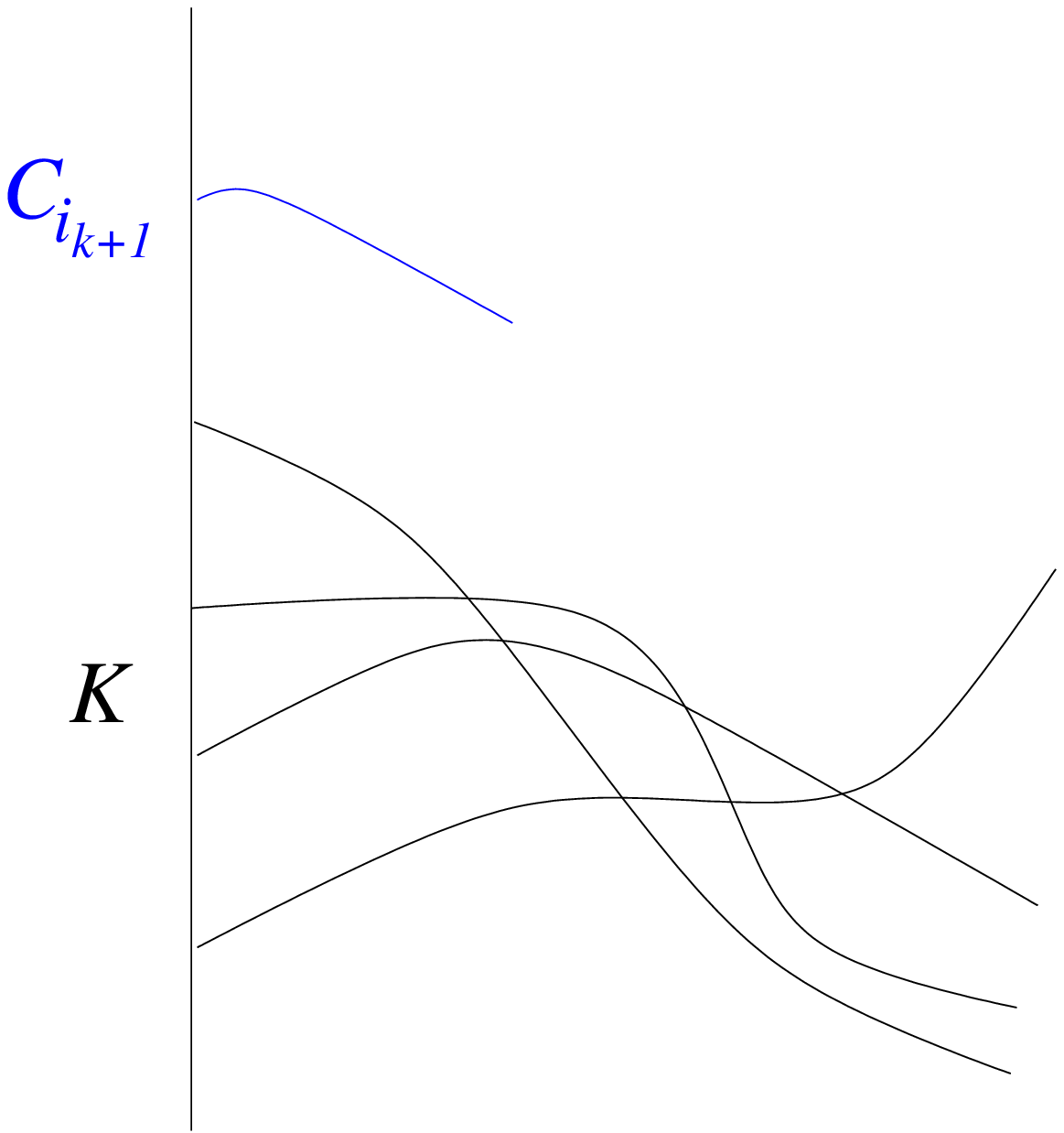}}\hspace{.7cm}
\subfigure[Type 2 configuration.]{\label{type2}\includegraphics[width=0.3\textwidth]{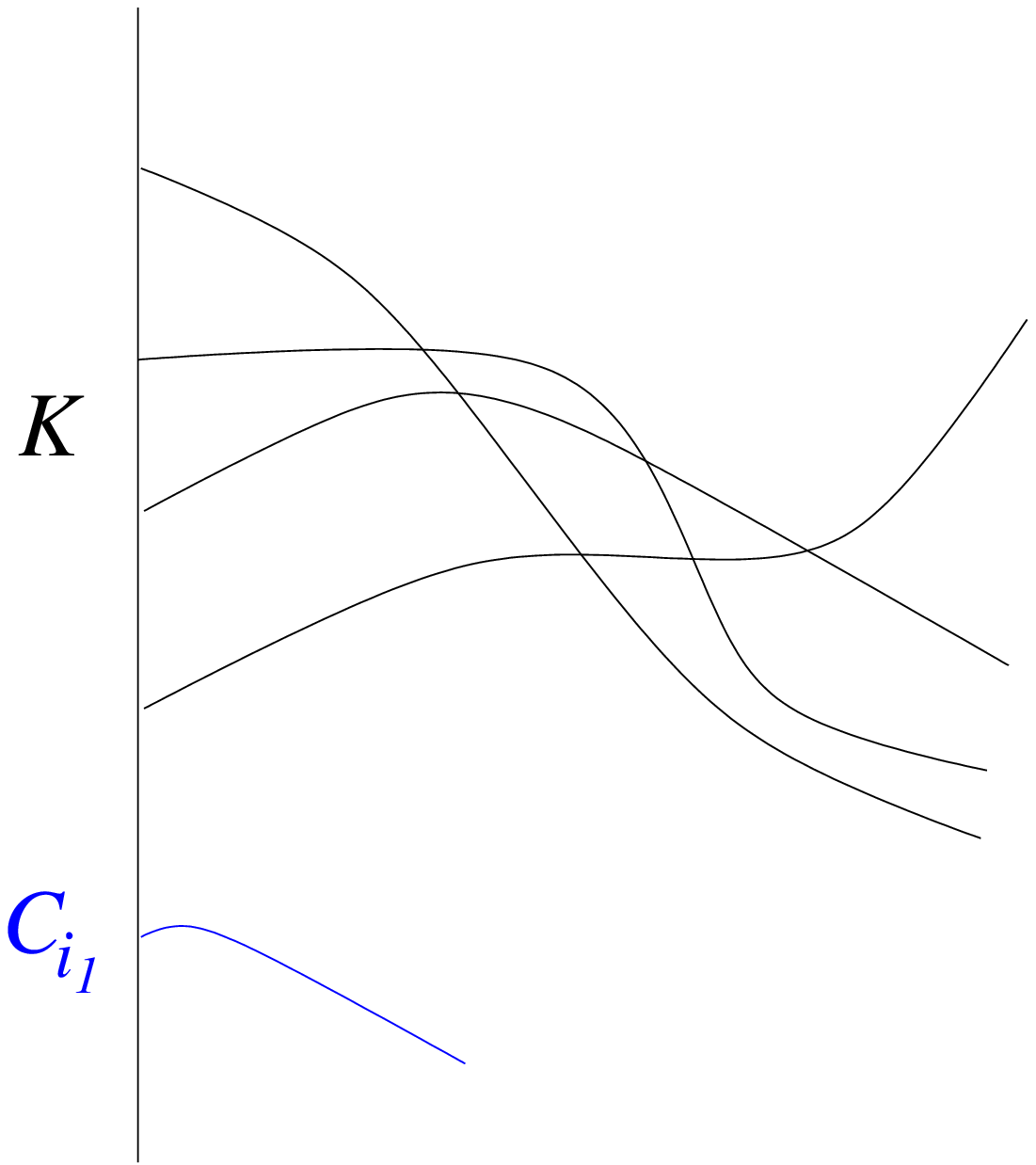}}\hspace{.7cm}
\subfigure[Type 3 configuration.]{\label{type3}\includegraphics[width=0.3\textwidth]{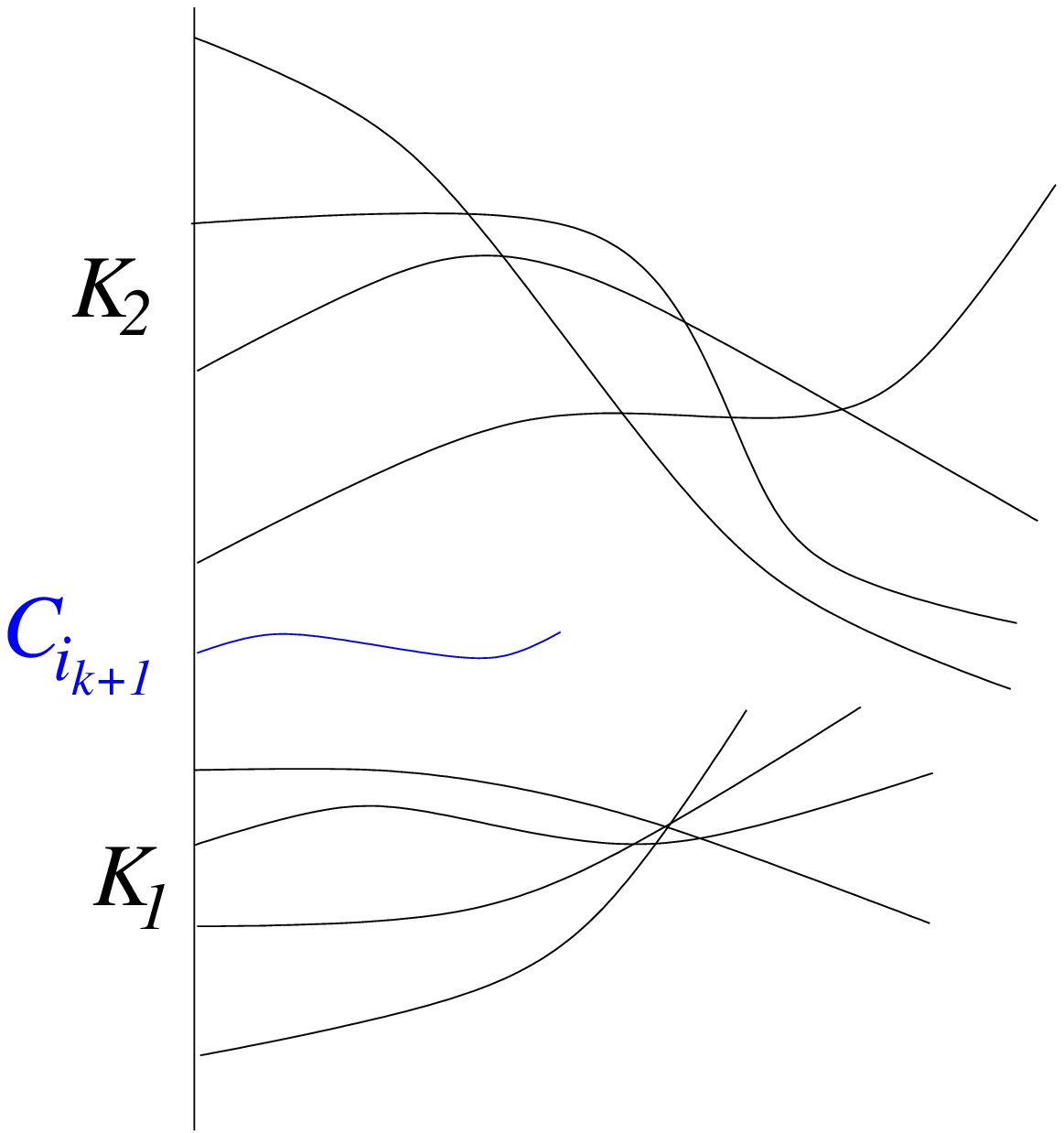}}
                        \caption{Special configurations.}
  \label{lu}
\end{figure}

\noindent Note that in a type 3 configuration, a curve in $\mathcal{K}_1$ may or may not intersect a curve in $\mathcal{K}_2$.  The goal of this subsection will be to show that if $G(\mathcal{F})$ has large chromatic number, then it must contain a type 3 configuration.  We start by proving several lemmas.

\begin{lemma}
\label{short}
Let $\mathcal{F} = \{C_1,C_2,...,C_n\}$ be a family of $n$ $x$-monotone right-flag curves.  Suppose the set of curves $\mathcal{K} = \{C_{i_1},C_{i_2},...,C_{i_m}\}$ pairwise intersect with $i_1 < i_2 < \cdots < i_m$.  If there exits a curve $C_j$ such that $C_j$ is disjoint to all members in $\mathcal{K}$ and $i_1 < j < i_m$, then

$$x(C_j) \leq x(\mathcal{K}).$$

\end{lemma}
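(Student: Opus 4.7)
The plan is to argue by contradiction, assuming $x(C_j) > x(\mathcal{K})$. The engine is a simple order-preservation observation: whenever two $x$-monotone right-flag curves are disjoint, the sign of their difference in $y$-coordinates is constant on their common $x$-interval by continuity. Because left endpoints lie on the $y$-axis in the order of the indices, this sign is determined by the comparison of indices. So for every $t$, wherever both $C_j$ and $C_{i_t}$ are defined, $C_j$ lies above $C_{i_t}$ iff $j > i_t$.

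To set up the contradiction, pick $s$ with $x(C_{i_s}) = x(\mathcal{K})$. Since $C_j$ is disjoint from every curve in $\mathcal{K}$, we have $j \ne i_t$ for all $t$, so there is a unique $t_j \in \{1, \ldots, m-1\}$ with $i_{t_j} < j < i_{t_j+1}$. Now split on whether $s \le t_j$ or $s \ge t_j + 1$. Consider Case 1, where $s \le t_j$: the order-preservation principle gives $C_{i_s} < C_j$ on their common $x$-domain and $C_j < C_{i_{t_j+1}}$ on theirs. Since $C_{i_s}$ and $C_{i_{t_j+1}}$ both lie in $\mathcal{K}$, they must cross at some $x^*$ with $x^* \le \min(x(C_{i_s}), x(C_{i_{t_j+1}})) \le x(\mathcal{K})$; simplicity makes this a proper crossing, so just past $x^*$ their relative order flips to $C_{i_s} > C_{i_{t_j+1}}$. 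Were $C_j$ still defined at $x^* + \varepsilon$, the two order-preservation constraints would force the impossible chain $C_j > C_{i_s} > C_{i_{t_j+1}} > C_j$. Hence $x(C_j) \le x^* \le x(\mathcal{K})$, contradicting $x(C_j) > x(\mathcal{K})$. Case 2 ($s \ge t_j + 1$) is handled symmetrically, now using the crossing of $C_{i_s}$ with $C_{i_{t_j}}$ (and the analogous chain $C_j < C_{i_s} < C_{i_{t_j}} < C_j$).

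The main obstacle, such as it is, is merely careful bookkeeping: verifying that $t_j$ and $t_j + 1$ fall in the legal range $\{1, \ldots, m\}$ (automatic from $i_1 < j < i_m$) and invoking simplicity to upgrade the pairwise intersection in $\mathcal{K}$ into a proper crossing that actually flips the $y$-order rather than merely touching tangentially. With these ingredients in hand, the ``three curves caught in a sandwich'' contradiction is immediate, and I do not expect any deeper combinatorial difficulty.
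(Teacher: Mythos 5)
Your proof is correct and takes essentially the same route as the paper's: sandwich $C_j$ between two members of $\mathcal{K}$ that straddle it on the $y$-axis, and use their crossing point, which has $x$-coordinate at most $x(\mathcal{K})$, to contradict $x(C_j) > x(\mathcal{K})$. One small remark: the step ``just past $x^*$ the order flips'' tacitly assumes both curves of $\mathcal{K}$ extend beyond $x^*$, which can fail if they meet exactly at the right endpoint of $C_{i_s}$ (simplicity only rules out tangencies at interior points); but your strict order-preservation principle already yields the contradiction at $x^*$ itself, where $C_{i_s}$ and $C_{i_{t_j+1}}$ coincide while $C_j(x^*)$ must lie strictly between them, so no flip past $x^*$ is needed.
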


\begin{proof}

Suppose that $x(C_{i_t}) < x(C_j)$ for some $t$.  Without loss of generality, we can assume $i_1< j < i_t$.  Since $C_{i_1}$ and $C_{i_t}$ cross and are $x$-monotone, this implies that either $C_{i_1}$ or $C_{i_t}$ intersects $C_j$ and therefore we have a contradiction.  A symmetric argument holds if $i_t < j < i_m$.

\end{proof}

\begin{lemma}
\label{one}
Let $\mathcal{F} = \{C_1,C_2,...,C_n\}$ be a family of $n$ $x$-monotone right-flag curves.  Then for any set of $t$ curves $C_{i_1},C_{i_2},...,C_{i_t}\in \mathcal{F}$ where $ t\leq 2k$, if $\chi(\mathcal{F}) > 2^{\beta}$, then either

\begin{enumerate}

\item $\mathcal{F}$ contains $k+ 1$ pairwise crossing members, or

\item there exists a subset $\mathcal{H}\subset \mathcal{F}\setminus\{C_{i_1},C_{i_2},...,C_{i_t}\}$, such that each curve $C_j \in \mathcal{H}$ is disjoint to all members in $\{C_{i_1},C_{i_2}...,C_{i_t}\}$, and $\chi(\mathcal{H}) > 2^{\beta} - 2^{2\lambda_k}$.
\end{enumerate}
\end{lemma}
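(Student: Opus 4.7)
The plan is to reduce to the case where the first alternative fails and then apply the inductive hypothesis to the ``neighborhood'' of each of the $t$ chosen curves, combining the resulting bounds by a union bound. First I would assume that $\mathcal{F}$ does not contain $k+1$ pairwise crossing members, since otherwise the first alternative already holds. For each $j \in \{1,\ldots,t\}$, let $\mathcal{N}_j = \{C \in \mathcal{F} : C \cap C_{i_j} \neq \emptyset\}$ denote the set of curves that cross $C_{i_j}$. The key observation is that $\mathcal{N}_j$ cannot contain $k$ pairwise crossing members: if it did, adjoining $C_{i_j}$ (which crosses every curve in $\mathcal{N}_j$) would produce $k+1$ pairwise crossing curves in $\mathcal{F}$, contradicting our working assumption.

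Since $\mathcal{N}_j$ is itself a simple family of $x$-monotone right-flag curves with no $k$ pairwise crossing members, the inductive hypothesis of Theorem~\ref{flag} yields $\chi(\mathcal{N}_j) \leq 2^{\lambda_k}$. Now set $\mathcal{N} = \{C_{i_1},\ldots,C_{i_t}\} \cup \bigcup_{j=1}^{t} \mathcal{N}_j$ and $\mathcal{H} = \mathcal{F} \setminus \mathcal{N}$. By construction $\mathcal{H} \subseteq \mathcal{F} \setminus \{C_{i_1},\ldots,C_{i_t}\}$ and every curve in $\mathcal{H}$ is disjoint from each $C_{i_j}$, so $\mathcal{H}$ has the required form. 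The standard union bound for chromatic number gives
\[
\chi(\mathcal{N}) \;\leq\; \chi(\{C_{i_1},\ldots,C_{i_t}\}) \;+\; \sum_{j=1}^{t}\chi(\mathcal{N}_j) \;\leq\; t + t\cdot 2^{\lambda_k} \;\leq\; 2k\,(2^{\lambda_k}+1),
\]
and using $t \leq 2k$ together with the hypothesis $\lambda_k > \log(2k)$ this quantity is at most $2^{2\lambda_k}$. Consequently $\chi(\mathcal{H}) \geq \chi(\mathcal{F}) - \chi(\mathcal{N}) > 2^{\beta} - 2^{2\lambda_k}$, which is the desired conclusion.

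The argument is essentially a plain union bound, so there is no conceptual obstacle; the only step requiring a bit of care is verifying that the additive cost of at most $2k$ extra colors used to cover the chosen curves $C_{i_j}$ themselves, together with the sum $t\cdot 2^{\lambda_k}$ coming from their neighborhoods, is comfortably absorbed into $2^{2\lambda_k}$. This is precisely where the quantitative hypothesis $\lambda_k > \log(2k)$ is used.
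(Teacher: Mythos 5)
Your proposal is correct and follows essentially the same route as the paper: remove, for each chosen curve $C_{i_j}$, the set of curves meeting it (whose chromatic number is at most $2^{\lambda_k}$ by the induction hypothesis for Theorem \ref{flag}, since otherwise adjoining $C_{i_j}$ gives $k+1$ pairwise crossing curves), and absorb the total loss $t\cdot 2^{\lambda_k}$ into $2^{2\lambda_k}$ via $t \leq 2k < 2^{\lambda_k}$. The only (harmless) difference is that you explicitly delete the curves $C_{i_1},\dots,C_{i_t}$ themselves and account for the extra $t$ colors, a detail the paper leaves implicit.
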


\begin{proof}

For each $j \in \{1,2,...,t\}$, let $\mathcal{H}_{j} \subset \mathcal{F}$ be the subset of curves that intersect $C_{i_j}$.  If $\chi(\mathcal{H}_{j}) > 2^{\lambda_k}$ for some $j\in \{1,2,...,t\}$, then $\mathcal{F}$ contains $k+1$ pairwise crossing members.  Therefore, we can assume that $\chi(\mathcal{H}_{j}) \leq 2^{\lambda_k}$ for all $1\leq j \leq t$.  Now let $\mathcal{H}\subset \mathcal{F}$ be the subset of curves defined by

$$\mathcal{H} = \mathcal{F}\setminus(\mathcal{H}_{1}\cup\mathcal{H}_{2}\cup \cdots \cup \mathcal{H}_{t}).$$

\noindent Since $\chi(\mathcal{F}) > 2^{\beta}$, we have

$$\chi(\mathcal{H}) > 2^{\beta} - t2^{\lambda_k} \geq 2^{\beta} - 2^{2\lambda_k},$$

\noindent where the last inequality follows from the fact that $\log t < \log 2k < \lambda_k$.

\end{proof}

\begin{lemma}

\label{two}
Let $\mathcal{F} = \{C_1,C_2,...,C_n\}$ be a family of $n$ $x$-monotone right-flag curves.  If $\chi(\mathcal{F}) \geq 2^{4\lambda_k + 107}$, then either

\begin{enumerate}

\item $\mathcal{F}$ contains $k+1$ pairwise crossing members, or

\item $\mathcal{F}$ contains a type 1 configuration, or

    \item $\mathcal{F}$ contains a type 2 configuration.
\end{enumerate}
\end{lemma}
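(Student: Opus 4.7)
The plan is to bypass the Key Lemma~\ref{crucial} entirely and to use only the single curve $C^*\in\mathcal{F}$ with the smallest right $x$-endpoint. Assume $\mathcal{F}$ contains no $k+1$ pairwise crossing members (otherwise outcome (1) holds); I will then exhibit a type~1 or type~2 configuration whose isolated curve is $C^*$.

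By a standard general-position perturbation (which shortens right tips by infinitesimal distinct amounts and leaves the intersection graph unchanged, since $\mathcal{F}$ is simple and intersections are away from endpoints), assume the right-endpoint $x$-coordinates in $\mathcal{F}$ are pairwise distinct, and let $C^*=C_{s^*}$ be the unique curve minimizing $x(\cdot)$. Partition $\mathcal{F}$ as $\{C^*\}\cup N(C^*)\cup \mathcal{F}^+\cup \mathcal{F}^-$, where $N(C^*)$ is the set of curves intersecting $C^*$, and
\[
\mathcal{F}^+=\{C_i\in\mathcal{F}:i>s^*,\ C_i\cap C^*=\emptyset\},\qquad \mathcal{F}^-=\{C_i\in\mathcal{F}:i<s^*,\ C_i\cap C^*=\emptyset\}.
\]
Any $k$-clique inside $N(C^*)$ would together with $C^*$ be a $(k+1)$-clique in $\mathcal{F}$, so $N(C^*)$ contains no $k$ pairwise crossing members, and the induction hypothesis (Theorem~\ref{flag} for $k$) yields $\chi(N(C^*))\leq 2^{\lambda_k}$. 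Summing chromatic numbers across the partition gives $\chi(\mathcal{F})\leq 1+2^{\lambda_k}+\chi(\mathcal{F}^+)+\chi(\mathcal{F}^-)$, and since $2^{4\lambda_k+107}$ vastly exceeds $1+3\cdot 2^{\lambda_k}$, the hypothesis forces one of $\chi(\mathcal{F}^+),\chi(\mathcal{F}^-)$ to exceed $2^{\lambda_k}$.

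Assume without loss of generality that $\chi(\mathcal{F}^+)>2^{\lambda_k}$. A second application of the induction hypothesis produces a $k$-clique $\mathcal{K}=\{C_{i_2},\ldots,C_{i_{k+1}}\}\subset\mathcal{F}^+$ with $s^*<i_2<\cdots<i_{k+1}$. Setting $i_1:=s^*$, the tuple $(C_{i_1},C_{i_2},\ldots,C_{i_{k+1}})$ satisfies all four clauses of a type~2 configuration: the indices are strictly increasing; $\mathcal{K}$ is a $k$-clique; $C^*=C_{i_1}$ is disjoint from every member of $\mathcal{K}$ because $\mathcal{K}\subset\mathcal{F}^+$; and $x(C_{i_1})<x(\mathcal{K})$ by the strict minimality of $x(C^*)$ after the perturbation. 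The symmetric case $\chi(\mathcal{F}^-)>2^{\lambda_k}$ produces a type~1 configuration with $C^*$ in the role of $C_{i_{k+1}}$. The one delicate step is preserving the strict inequality $x(C^*)<x(\mathcal{K})$, which is handled by the initial tie-breaking perturbation; my argument in fact only needs $\chi(\mathcal{F})>2^{\lambda_k+2}$, so the much larger bound $2^{4\lambda_k+107}$ in the hypothesis appears to reflect stylistic parallelism with the forthcoming type~3 argument (which routes through Lemma~\ref{crucial} and Lemma~\ref{sequence}) rather than any genuine necessity.
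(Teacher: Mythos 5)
Your proof is correct, and it takes a genuinely different---and much shorter---route than the paper's. The paper proves Lemma \ref{two} with the same heavy machinery it uses for Lemma \ref{three}: it passes to a distance class $\mathcal{F}^d$ (Lemma \ref{distance}), applies Lemma \ref{sequence} repeatedly together with the Key Lemma \ref{crucial} to build nested cliques inside the pocket between two crossing curves $C_a,C_b$, and then uses the path from $C_1$ to a curve $C_q$ in that pocket to produce the clique of the configuration, splitting into the cases $p_{d-1}<a$ (type 1) and $p_{d-1}>b$ (type 2); this genuinely consumes the $2^{4\lambda_k+107}$ budget. You instead observe that the isolated curve can simply be the curve $C^*$ minimizing $x(\cdot)$: its neighborhood has no $k$ pairwise crossing members (else, with $C^*$, outcome (1) holds), so the induction hypothesis bounds $\chi(N(C^*))$ by $2^{\lambda_k}$; subadditivity of $\chi$ over the vertex partition $\{C^*\}\cup N(C^*)\cup\mathcal{F}^+\cup\mathcal{F}^-$ then forces a $k$-clique among the non-neighbors on one side of $C^*$, and that clique together with $C^*$ satisfies, verbatim, the definition of a type 2 (clique above) or type 1 (clique below) configuration. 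This yields the lemma under the far weaker hypothesis $\chi(\mathcal{F})>2^{\lambda_k+2}$, and it is compatible with the rest of the paper, since Lemma \ref{three} invokes Lemma \ref{two} only as a black box applied to $\mathcal{D}_2[r_0,r_1]$ (the extra structure used there---membership in $\mathcal{F}_1\cap\mathcal{F}^d$, disjointness from $\mathcal{A}$ and from $C_{i_1},\dots,C_{i_k}$---comes from the construction of that subfamily, not from the proof of Lemma \ref{two}). The one caveat is the strict inequality $x(C^*)<x(\mathcal{K})$ when right endpoints can share an $x$-coordinate: your perturbation covers the generic situation, though its justification that crossings avoid endpoints is itself a general-position assumption; but this is no worse than the paper, whose own proof only derives $x(C_{q'})\le x(\mathcal{K})$ from Lemma \ref{short} before declaring a type 1 configuration, and the strictness is never used downstream (the type 3 definition and its applications only need $\le$).
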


\begin{proof}
Assume that $\mathcal{F}$ does not contain $k+1$ pairwise crossing members.  By Lemma \ref{distance}, for some $d \geq 2$, the subset of curves $\mathcal{F}^d$ at distance $d$ from curve $C_1$ satisfies

$$\chi(\mathcal{F}^d) \geq \frac{\chi(\mathcal{F})}{2} \geq 2^{4\lambda_k + 106}.$$

By Lemma \ref{sequence}, there exists a subset $\mathcal{H}_1 \subset \mathcal{F}^d$ such that $\chi(\mathcal{H}_1) >2$, and for every pair of curves $C_a,C_b\in \mathcal{H}_1$ that intersect, $\mathcal{F}^d(a,b)\geq 2^{4\lambda_k + 104}$.  Fix two such curves $C_a,C_b \in \mathcal{H}_1$ and let $\mathcal{A}$ be the set of curves in $\mathcal{F}(a,b)$ that intersects either $C_a$ or $C_b$.  By Lemma \ref{crucial}, there exists a subset $\mathcal{D}_1\subset \mathcal{F}^d(a,b)$ such that each curve $C_i \in \mathcal{D}_1$ is disjoint to $C_a$, $C_b$, $\mathcal{A}$, and moreover

 $$\chi(\mathcal{D}_1) \geq 2^{4\lambda_k + 104} -2^{\lambda_k + 1} -  k\cdot 2^{2\lambda_k + 102} > 2^{4\lambda_k + 103}.$$

\noindent Again by Lemma \ref{sequence}, there exists a subset $\mathcal{H}_2 \subset \mathcal{D}_1$ such that $\chi(\mathcal{H}_2) > 2^{\lambda_k}$, and for each pair of curves $C_u,C_v \in \mathcal{H}$ that intersect, $\chi(\mathcal{D}_1(u,v)) \geq 2^{3\lambda_k + 102}$.  Therefore, $\mathcal{H}_2$ contains $k$ pairwise crossing curves $C_{i_1},...,C_{i_k}$ such that $i_1 < i_2 < \cdots < i_k$.  Since $\chi(\mathcal{D}_1(i_1,i_2)) \geq 2^{3\lambda_k + 102}$, by Lemma \ref{one}, there exists a subset $\mathcal{D}_2 \subset \mathcal{D}(i_1,i_2)$ such that every curve $C_l \in \mathcal{D}_2$ is disjoint to the set of curves $\{C_{i_1},...,C_{i_k}\}$ and

$$\chi(\mathcal{D}_2) \geq 2^{3\lambda_k + 102} - 2^{2\lambda_k} > 2^{3\lambda_k + 101}.$$

\noindent By applying Lemma \ref{sequence} one last time, there exists a subset $\mathcal{H}_3 \subset \mathcal{D}_2$ such that $\chi(\mathcal{H}_3) > 2^{\lambda_k}$, and for every pair of curves $C_u,C_v \in \mathcal{H}_3$ that intersect, we have $\chi(\mathcal{D}_2(u,v)) \geq 2^{2\lambda_k + 100}$.  Therefore, $\mathcal{H}_3$ contains $k$ pairwise intersecting curves $C_{j_1},C_{j_2},...,C_{j_k}$ such that $i_1 < j_1 < j_2 < \cdots < j_k < i_2$.  Since $\chi(\mathcal{D}_2(j_{k-1},j_k)) \geq 2^{2\lambda_k + 100}$, by Lemma \ref{one}, there exists a subset $\mathcal{D}_3 \subset \mathcal{D}_2(j_{k-1},j_k)$ such that every curve $C_l \in \mathcal{D}_3$ is disjoint to the set of curves $\{C_{j_1},...,C_{j_k}\}$ (and disjoint to the set of curves $\{C_{i_1},C_{i_2},...,C_{i_k}\}$) and

$$\chi(\mathcal{D}_3) \geq 2^{2\lambda_k  + 100} - 2^{2\lambda_k}>  2^{2\lambda_k + 99}.$$

\noindent Now we can define a $2^{2\lambda_k + 1}$-sequence $\{r_i\}_{i = 0}^m$ of $\mathcal{D}_3$ such that $m \geq 4$ (recall the definition of an $\alpha$-sequence of $\mathcal{F}$ from Section 2).  That is, we have subsets $\mathcal{D}_3[r_0,r_1], \mathcal{D}_3(r_1,r_2], ... , \mathcal{D}_3(r_{m-1},r_m]$ that satisfies

\begin{enumerate}

\item $j_{k-1} < r_0 < r_1 < \cdots < r_{m} < j_k$, and

\item $\chi(\mathcal{D}_3[r_0,r_1]) = \chi(\mathcal{D}_3(r_1,r_2]) = \cdots = \chi(\mathcal{D}_3(r_{m-2},r_{m-1}]) = 2^{2\lambda_k  + 1}$.

\end{enumerate}

\noindent Fix a curve $C_q \in \mathcal{D}_3(r_1,r_2]$.  See Figure \ref{fig}.

 \begin{figure}
  \centering
\subfigure[Curve $C_q$.]{\label{fig}\includegraphics[width=0.39\textwidth]{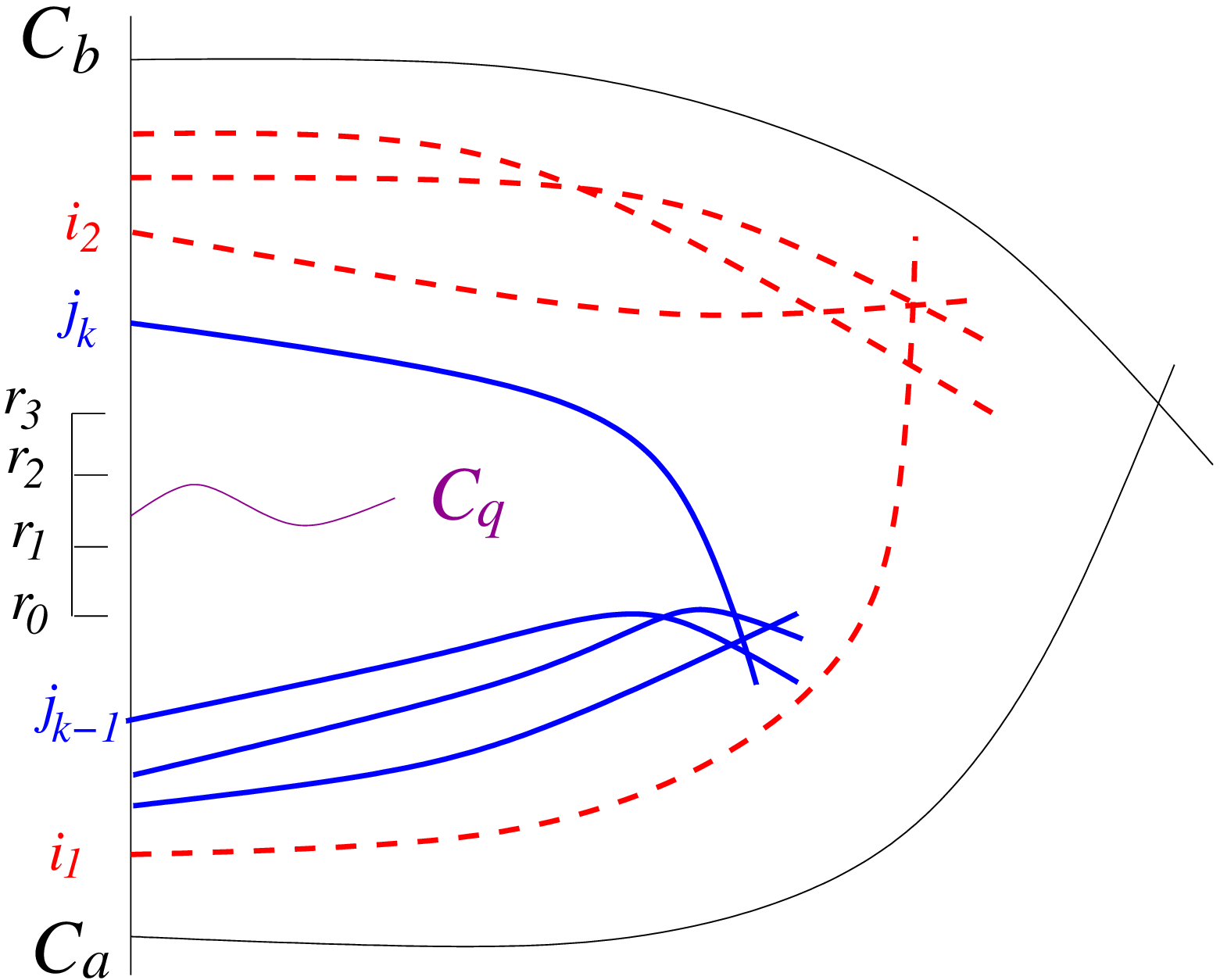}}\hspace{1cm}
\subfigure[Curve $C_{q'}$.]{\label{fig2}\includegraphics[width=0.41\textwidth]{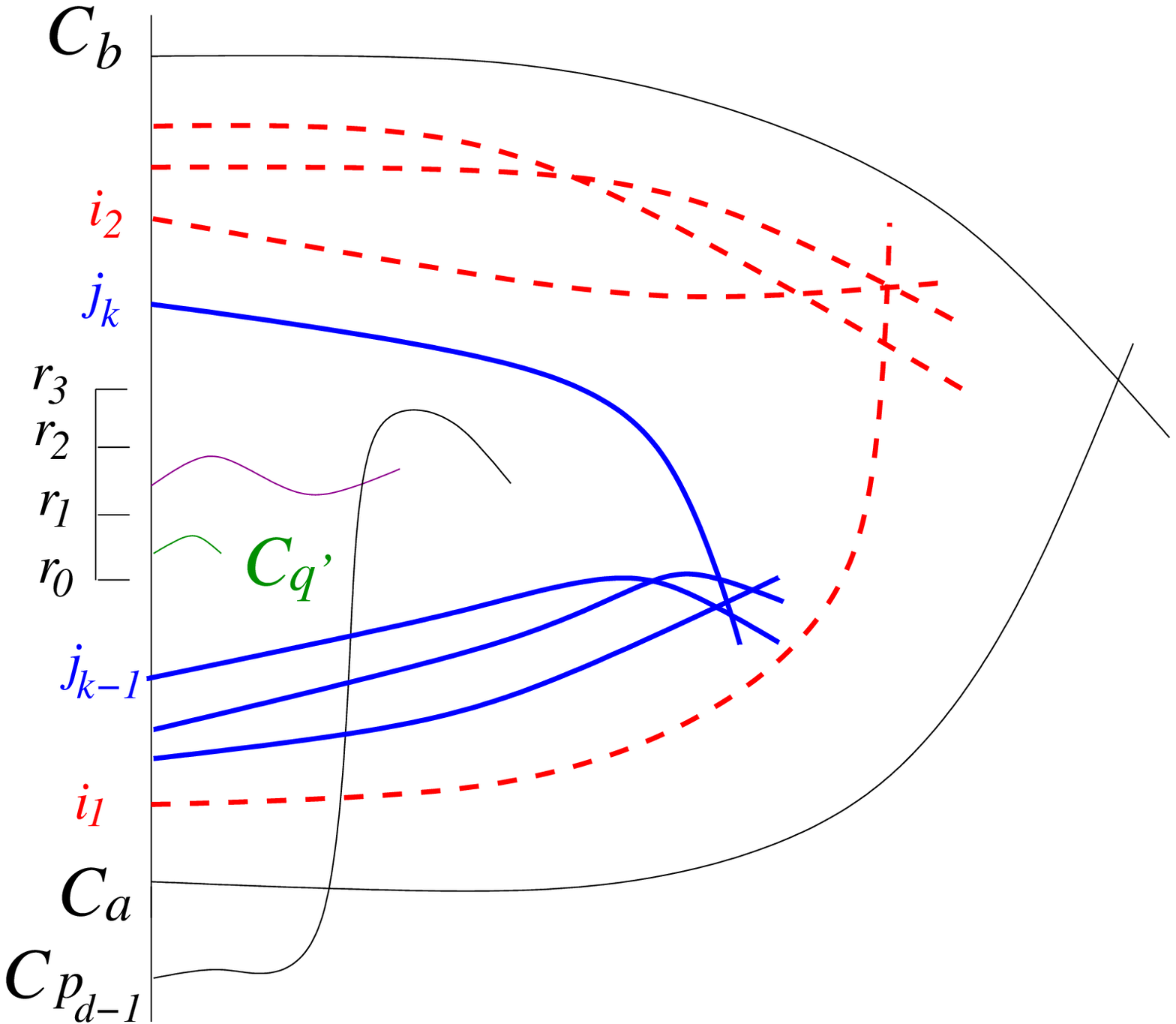}}
                        \caption{Lemma \ref{two}.}
\end{figure}

\medskip

\noindent Since $C_q \in  \mathcal{D}_2(r_1,r_2]\subset \mathcal{F}^d$, there is a path $C_1,C_{p_1},C_{p_2},...,C_{p_{d-1}},C_{q}$ such that $C_{p_t}$ is at distance $t$ from $C_1$ for $1 \leq t \leq d-1$.  Let $R$ be the region enclosed by the $y$-axis, $C_{a}$ and $C_{b}$.  Since $C_q$ lies inside of $R$, and $C_1$ lies outside of $R$, there must be a curve $C_{p_t}$ that intersects either $C_{a}$ or $C_{b}$ for some $1 \leq t \leq d-1$.  Since $C_a,C_b \in \mathcal{F}^d$ and $C_q \in \mathcal{D}_1$, $C_{p_{d-1}}$ must be this curve and we must have either $p_{d-1} < a$ or $p_{d-1} > b$.  Now the proof splits into two cases.

\medskip

\noindent \emph{Case 1.}  Suppose $p_{d-1} < a$.  By Lemma \ref{short}, we have

  $$x(C_q) \leq x(\{C_{j_1},C_{j_2},...,C_{j_{k-1}}\}),$$

 \noindent which implies that the set of $k$ curves $\mathcal{K}  = \{C_{p_{d-1}},C_{j_1},C_{j_2},...,C_{j_{k-1}}\}$ are pairwise crossing.  Now recall that $\chi(\mathcal{D}_3[r_0,r_1]) = 2^{2\lambda_k + 1}$.  By Lemma \ref{one}, there exists a curve $C_{q'}\in \mathcal{D}_3[r_0,r_1]$ such that the curve $C_{q'}$ intersects neither $C_{p_{d-1}}$ nor $C_{q}$.  See Figure \ref{fig2}.  By construction of $\mathcal{D}_3[r_0,r_1]$, $C_{q'}$ does not intersect any of the curves in the set $\mathcal{K}\cup \{C_q\}$.  By Lemma \ref{short}, we have

$$x(C_{q'}) \leq x(C_{q}) \leq x(\mathcal{K}).$$

\noindent Hence $\mathcal{K}\cup \{C_{q'}\}$ is a type 1 configuration.

\medskip

\noindent\emph{Case 2.}  If $p_{d-1} > b$, then by a symmetric argument, $\mathcal{F}$ contains a type 2 configuration.

\end{proof}

\begin{lemma}

\label{three}
Let $\mathcal{F} = \{C_1,C_2,...,C_n\}$ be a family of $n$ $x$-monotone right-flag curves. If $\chi(\mathcal{F}) > 2^{5\lambda_k + 116}$, then $\mathcal{F}$ contains $k+1$ pairwise crossing members or a type 3 configuration.
\end{lemma}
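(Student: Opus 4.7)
The plan is to follow the proof of Lemma \ref{two} with the larger chromatic budget $2^{5\lambda_k+116}$, so that the probe argument can be executed twice and produce a single disjoint middle curve flanked by two pairwise-crossing $k$-cliques.

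First, I would carry out the opening construction identically in shape to Lemma \ref{two}: apply Lemma \ref{distance} to isolate a layer $\mathcal{F}^d$ with $\chi(\mathcal{F}^d)>2^{5\lambda_k+115}$, then alternate Lemma \ref{sequence}, Lemma \ref{crucial}, and Lemma \ref{one} through the same four refinement steps, arriving at an intersecting pair $C_a,C_b\in\mathcal{F}^d$; $k$ pairwise crossing curves $C_{i_1}<\cdots<C_{i_k}$ inside $\mathcal{F}^d(a,b)$; $k$ pairwise crossing curves $C_{j_1}<\cdots<C_{j_k}$ inside $\mathcal{D}_2\subset\mathcal{D}_1(i_1,i_2)$; and a region $\mathcal{D}_3\subset\mathcal{D}_2(j_{k-1},j_k)$ disjoint from all previously chosen curves, now with $\chi(\mathcal{D}_3)>2^{3\lambda_k+108}$ thanks to the extra cushion of $2^{\lambda_k+9}$ at each level.

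Second, the enlarged $\chi(\mathcal{D}_3)$ supports a $2^{2\lambda_k+1}$-sequence $\{r_i\}_{i=0}^m$ with $m\geq 6$. I pick two well-separated probe curves $C_q\in\mathcal{D}_3(r_1,r_2]$ and $C_{q^*}\in\mathcal{D}_3(r_4,r_5]$, and examine their shortest paths to $C_1$. As in Lemma \ref{two}, each penultimate curve $C_{p_{d-1}}$, $C_{p^*_{d-1}}$ lies in position strictly outside $(a,b)$, giving Case A ($<a$) or Case B ($>b$) for each. A chromatic pigeonhole on Case A versus Case B curves in $\mathcal{D}_3$, combined with a reflected version of the whole construction using $\mathcal{D}_3'\subset\mathcal{D}_2(j_1,j_2)$ in place of $\mathcal{D}_2(j_{k-1},j_k)$, reduces matters to the favorable AB sub-case. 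The Lemma \ref{two} argument applied to $C_q$ in Case A yields the lower $k$-clique $\mathcal{K}_1=\{C_{p_{d-1}},C_{j_1},\ldots,C_{j_{k-1}}\}$, and applied in reflected form to $C_{q^*}$ in Case B yields the upper $k$-clique $\mathcal{K}_2=\{C_{\ell_2},\ldots,C_{\ell_k},C_{p^*_{d-1}}\}$ built from a second independent $k$-clique $\{C_{\ell_t}\}$ produced by the reflected construction. The middle disjoint curve $C_*$ is then chosen in $\mathcal{D}_3(r_2,r_3]$, disjoint from $C_q,C_{q^*},C_{p_{d-1}},C_{p^*_{d-1}}$ via Lemma \ref{one}.

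The hard part will be verifying the $x$-coordinate condition $x(C_*)\leq x(\mathcal{K}_1\cup\mathcal{K}_2)$. Since $C_*$ lies above $\mathcal{K}_1$ and below $\mathcal{K}_2$ in position, Lemma \ref{short} applied to either clique alone does not suffice. I plan to chain Lemma \ref{short} through the pairwise-crossing pair $\{C_{p_{d-1}},C_q\}$ (which sandwiches $C_*$ in position, giving $x(C_*)\leq x(C_{p_{d-1}})$ and $x(C_*)\leq x(C_q)$) and the symmetric pair $\{C_{q^*},C_{p^*_{d-1}}\}$, and then propagate the bounds to every $C_{j_t}$ and $C_{\ell_t}$ exactly as in the Lemma \ref{two} proof of Case 1. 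The delicate chromatic bookkeeping across sub-cases together with the reflected construction is where I expect the bulk of the work.
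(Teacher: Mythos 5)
Your overall scaffolding (layers, Lemma \ref{sequence}, Lemma \ref{crucial}, Lemma \ref{one}, probes whose shortest paths force a curve $C_{p_{d-1}}$ with $p_{d-1}<a$ or $p_{d-1}>b$) matches the paper, but the way you try to assemble the two cliques has a genuine gap. Your plan needs the ``favorable AB sub-case'': one probe whose penultimate path curve enters from below $a$ and another whose penultimate curve enters from above $b$. Nothing in the hypotheses forces both directions to occur: it may happen that \emph{every} curve of $\mathcal{F}^d$ has all of its shortest paths arriving from the same side, and your chromatic pigeonhole only hands you the majority side. The ``reflected version of the construction'' does not repair this, because reflecting only changes where \emph{you} place the probe blocks (e.g.\ $\mathcal{D}_2(j_1,j_2)$ versus $\mathcal{D}_2(j_{k-1},j_k)$); it cannot change the side from which the paths arrive, which is a property of $\mathcal{F}$. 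In the one-sided situation (say all paths from below) a probe sitting in the top gap of the $j$-clique yields only the lower clique $\{C_{p_{d-1}},C_{j_1},\dots,C_{j_{k-1}}\}$, and no upper clique is available among your pre-selected curves ($\{C_{i_2},\dots,C_{i_k}\}$ is only $k-1$ curves and $C_{j_k}$ is disjoint from them), so the argument stalls exactly in the generic case. The paper resolves this by fixing the direction \emph{first}: it splits $\mathcal{F}^d$ into $\mathcal{F}_1,\mathcal{F}_2$ according to whether $p_{d-1}>i$ or $p_{d-1}<i$ and keeps the larger half, so that every later probe has a known approach side; it then needs only one clique $C_{i_1},\dots,C_{i_k}$, builds a $2^{4\lambda_k+107}$-sequence in $\mathcal{D}_2\subset\mathcal{D}_1(i_1,i_2)$, and invokes Lemma \ref{two} \emph{as a subroutine inside one block} $\mathcal{D}_2[r_0,r_1]$: a type 2 configuration there is killed outright (its lone bottom curve's path curve from above crosses the whole clique, giving $k+1$ pairwise crossing curves), while a type 1 configuration supplies $\mathcal{K}_1$, its lone curve's path curve supplies $\mathcal{K}_2=\{C_{i_2},\dots,C_{i_k},C_{p_{d-1}}\}$, and the middle curve comes from the next block $\mathcal{D}_2(r_1,r_2]$. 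No second path direction is ever needed.

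Even if you were granted the AB sub-case, your description of $\mathcal{K}_2$ does not work as stated: a clique $\{C_{\ell_t}\}$ produced inside $\mathcal{D}_3'\subset\mathcal{D}_2(j_1,j_2)$ lies positionally \emph{below} your middle curve $C_*\in\mathcal{D}_3(r_2,r_3]\subset\mathcal{F}(j_{k-1},j_k)$, so $\{C_{\ell_2},\dots,C_{\ell_k},C_{p^*_{d-1}}\}$ cannot serve as the upper clique of a type 3 configuration, whose members must all lie above the middle curve. The natural repair, $\mathcal{K}_2=\{C_{i_2},\dots,C_{i_k},C_{p^*_{d-1}}\}$, makes the reflected construction superfluous but returns you to the unresolved one-sidedness problem above. (Your $x$-coordinate bookkeeping via Lemma \ref{short} applied to crossing pairs such as $\{C_q,C_{p_{d-1}}\}$ is fine and is essentially what the paper does; the missing ideas are the $\mathcal{F}_1/\mathcal{F}_2$ direction split and the use of Lemma \ref{two} inside a single block to manufacture the second clique.)
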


\begin{proof}

Assume that $\mathcal{F}$ does not contain $k+1$ pairwise crossing members.  By Lemma \ref{distance}, for some $d \geq 2$, the subset of curves $\mathcal{F}^d$ at distance $d$ from $C_1$ satisfies

$$\chi(\mathcal{F}^d) \geq \frac{\chi(\mathcal{F})}{2} > 2^{5\lambda_k + 115}.$$

\noindent Recall that for each curve $C_i \in \mathcal{F}^d$, there is a path $C_1,C_{p_1},C_{p_2},...,C_{p_{d-1}},C_i$ such that $C_{p_t}$ is at distance $t$ from $C_1$. Now we define subsets $\mathcal{F}_1,\mathcal{F}_2 \subset \mathcal{F}^d$ as follows:

$$\mathcal{F}_1 = \{ C_i \in \mathcal{F}^d : \textnormal{there exists a path $C_1,C_{p_1},C_{p_2},...,C_{p_{d-1}},C_i$ with $p_{d-1} > i$.}\},$$

$$\mathcal{F}_2 = \{ C_i \in \mathcal{F}^d : \textnormal{there exists a path $C_1,C_{p_1},C_{p_2},...,C_{p_{d-1}},C_i$ with $p_{d-1} < i$.}\}.$$

\noindent Since $\mathcal{F}_1\cup \mathcal{F}_2 = \mathcal{F}^d$, either $\chi(\mathcal{F}_1) \geq \chi(\mathcal{F}^d)/2$ or $\chi(\mathcal{F}_2) \geq \chi(\mathcal{F}^d)/2$.  Since the following argument is the same for both cases, we will assume that

$$\chi(\mathcal{F}_1) \geq \frac{\chi(\mathcal{F}^d)}{2} \geq  2^{5\lambda_k +114}.$$

By Lemma \ref{sequence}, there exists a subset $\mathcal{H}_1 \subset \mathcal{F}_1$ such that $\chi(\mathcal{H}_1) >2$, and for every pair of curves $C_a,C_b\in \mathcal{H}_1$ that intersect, $\mathcal{F}_1(a,b)\geq 2^{5\lambda_k + 112}$.  Fix two such curves $C_a,C_b \in \mathcal{H}_1$ and let $\mathcal{A}$ be the set of curves in $\mathcal{F}(a,b)$ that intersects either $C_a$ or $C_b$.  By Lemma \ref{crucial}, there exists a subset $\mathcal{D}_1\subset \mathcal{F}_1(a,b)$ such that each curve $C_i \in \mathcal{D}_1$ is disjoint to $C_a$, $C_b$, $\mathcal{A}$, and moreover

 $$\chi(\mathcal{D}_1) \geq 2^{5\lambda_k + 112} -2^{\lambda_k + 1} -  k\cdot 2^{2\lambda_k + 102} > 2^{5\lambda_k + 111}.$$

\noindent Again by Lemma \ref{sequence}, there exists a subset $\mathcal{H}_2 \subset \mathcal{D}_1$ such that $\chi(\mathcal{H}_2) > 2^{\lambda_k}$, and for every pair of curves $C_u,C_v \in \mathcal{H}_2$ that intersect, $\chi(\mathcal{D}_1(u,v)) \geq 2^{4\lambda_k + 110}$.  Therefore, $\mathcal{H}_2$ contains $k$ pairwise crossing members $C_{i_1},C_{i_2}...,C_{i_k}$ for $i_1 < i_2 < \cdots < i_k$.  Since $\chi(\mathcal{D}_1(i_1,i_2)) \geq 2^{4\lambda_k + 110}$, by Lemma \ref{one}, there exists a subset $\mathcal{D}_2 \subset \mathcal{D}_1(i_1,i_2)$ such that

$$\chi(\mathcal{D}_2) > 2^{4\lambda_k + 110} - 2^{2\lambda_k} >  2^{4\lambda_k + 109},$$

\noindent and each curve $C_l \in \mathcal{D}_2$ is disjoint to the set of curves $\{C_{i_1},C_{i_2},...,C_{i_k}\}$.  Now we define a $2^{4\lambda_k + 107}$-sequence $\{r_i\}_{i = 0}^m$ of $\mathcal{D}_2$ such that $m \geq 4$.  Therefore, we have subsets

$$\mathcal{D}_2[r_0,r_1], \mathcal{D}_2(r_1,r_2],...,\mathcal{D}_2(r_{m-1},r_{m}]$$

\noindent such that

$$\chi(\mathcal{D}_2[r_0,r_1]) = \chi(\mathcal{D}_2(r_1,r_2]) =  2^{4\lambda_k + 107}.$$

\noindent By Lemma \ref{two}, we know that $\mathcal{D}_2[r_0,r_1]$ contains either a type 1 or type 2 configuration.
\medskip

Suppose that $\mathcal{D}_2[r_0,r_1]$ contains a type 2 configuration $\{\mathcal{K}_1,C_q\}$, where $\mathcal{K}_1$ is the set of $k$ pairwise intersecting curves.  See Figure \ref{last1}. $C_q \in \mathcal{D}_2[r_0,r_1] \subset \mathcal{F}^d$ implies that there exists a path $C_1,C_{p_1},C_{p_2},...,C_{p_{d-1}},C_{q}$ such that $p_{d-1}> q$.   Let $R$ be the region enclosed by the $y$-axis, $C_{a}$ and $C_{b}$.  Since $C_q$ lies inside of $R$, and $C_1$ lies outside of $R$, there must be a curve $C_{p_t}$ that intersects either $C_{a}$ or $C_{b}$ for some $1 \leq t \leq d-1$.  Since $C_a,C_b \in \mathcal{F}^d$, $C_{p_{d-1}}$ must be this curve.  Moreover, $C_q \in \mathcal{D}_1 \subset \mathcal{F}_1$ implies that $p_{d-1} > b$.  Since our curves are $x$-monotone and $x(C_q) \leq x(\mathcal{K}_1)$, $C_{p_{d-1}}$ intersects all of the curves in $\mathcal{K}_1$.  This creates $k+1$ pairwise crossing members in $\mathcal{F}$ and we have a contradiction.  See Figure \ref{last2}.

 \begin{figure}[h]
  \centering
\subfigure[Type 2 configuration.]{\label{last1}\includegraphics[width=0.39\textwidth]{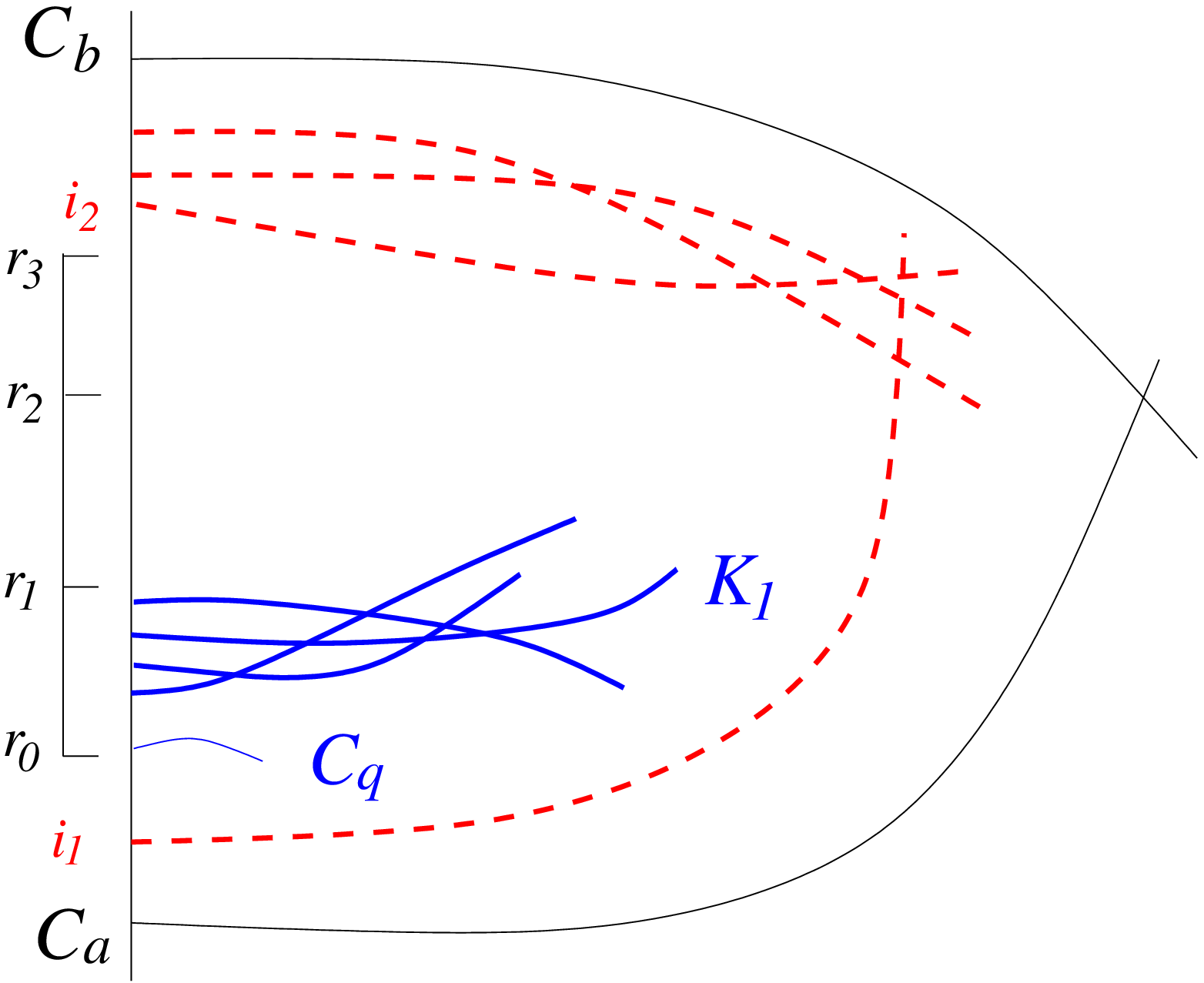}}\hspace{1cm}
\subfigure[$k+1$ pairwise crossing curves.]{\label{last2}\includegraphics[width=0.41\textwidth]{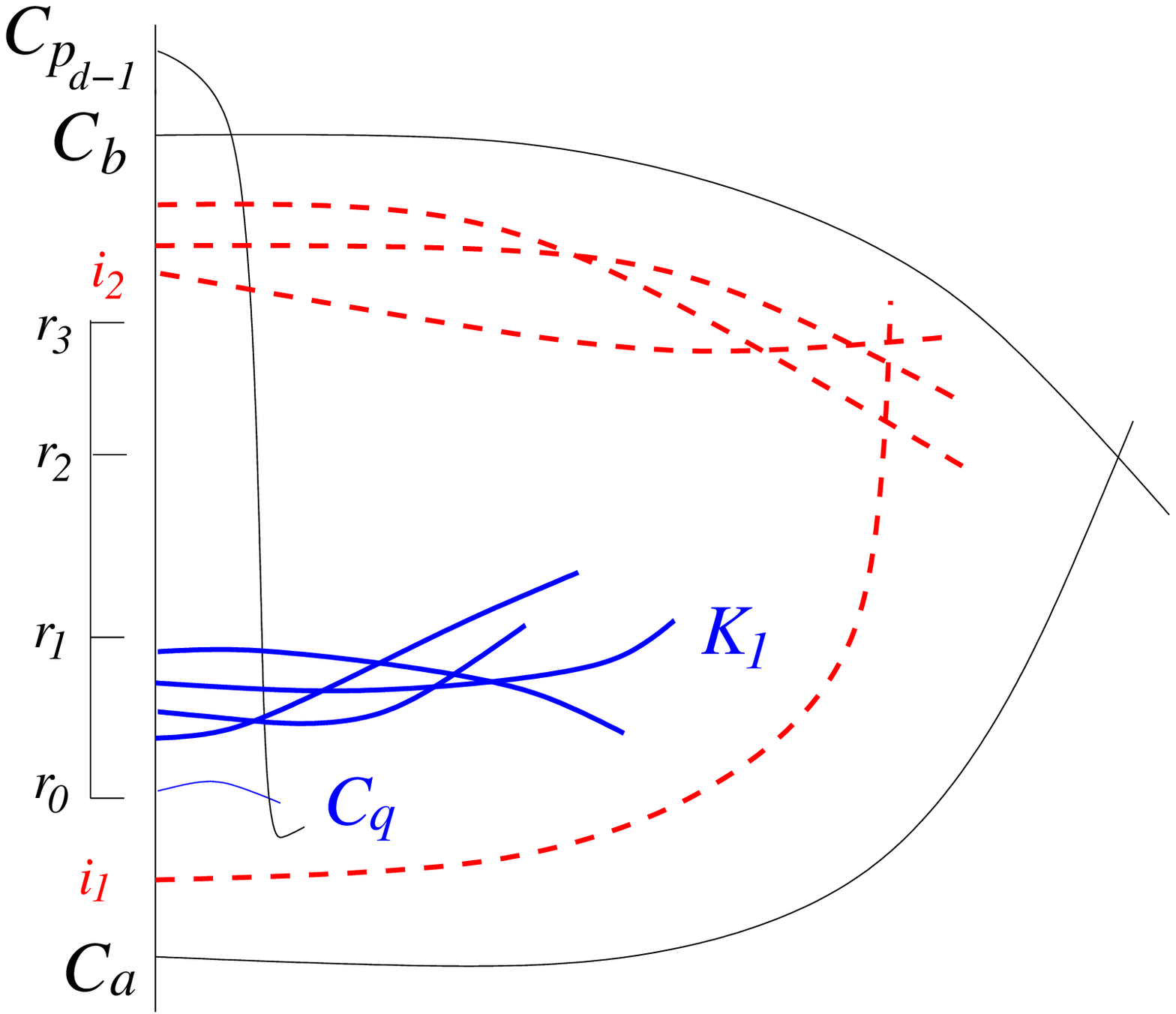}}
                        \caption{Lemma \ref{three}.}
\end{figure}

\medskip

Therefore we can assume that $\mathcal{D}_2[r_0,r_1]$ contains a type 1 configuration $\{\mathcal{K}_1,C_q\}$ where $\mathcal{K}_1$ is the set of $k$ pairwise intersecting curves.  See Figure \ref{last3}.  By the same argument as above, there exists a curve $C_{p_{d-1}}$ that intersects $C_q$ such that $p_{d-1} > b$.  Hence $\mathcal{K}_2=\{C_{i_2},C_{i_3},...,C_{i_k},C_{p_{d-1}}\}$ is a set of $k$ pairwise intersecting curves.  Since $\chi(D_2(r_1,r_2]) =  2^{4\lambda_k + 107}$, Lemma \ref{one} implies that there exists a curve $C_{q'} \in \mathcal{D}_1(r_1,r_2]$ that intersects neither $C_{q}$ nor $C_{p_{d-1}}$.  By construction of $\mathcal{D}_2(r_1,r_2]$ and by the definition of a type 1 configuration, $C_{q'}$ does not intersect any members in the set $\{\mathcal{K}_1,\mathcal{K}_2,C_q\}$.  By Lemma \ref{short}, we have

$$x(C_{q'})\leq x(C_q) \leq x(\mathcal{K}_1\cup \mathcal{K}_2),$$

\noindent and therefore $\mathcal{K}_1,\mathcal{K}_2,C_{q'}$ is a type 3 configuration.  See Figure \ref{last4}.

 \begin{figure}
  \centering
\subfigure[Type 1 configuration.]{\label{last3}\includegraphics[width=0.37\textwidth]{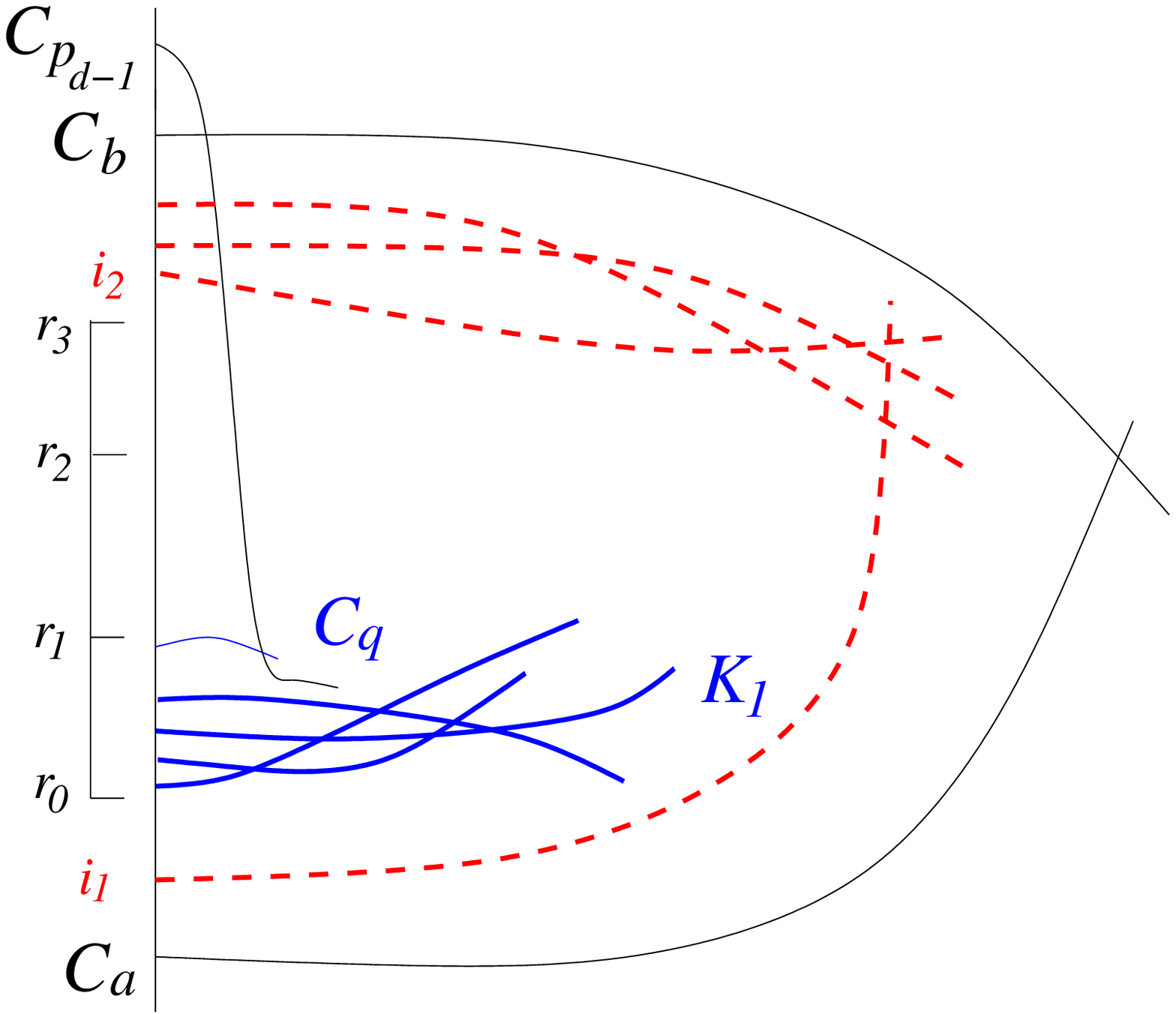}}\hspace{1cm}
\subfigure[Type 3 configuration.]{\label{last4}\includegraphics[width=0.39\textwidth]{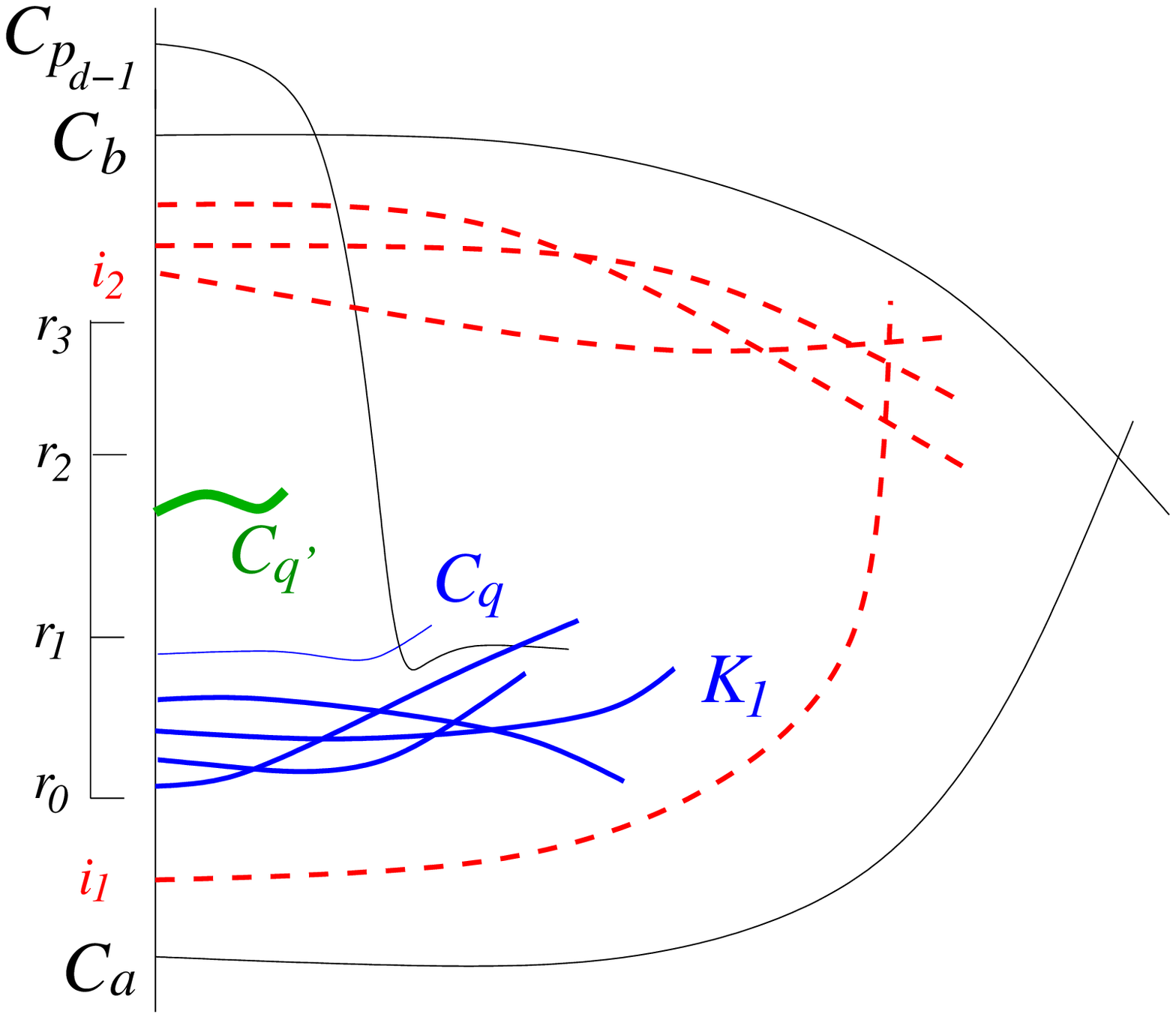}}
                        \caption{Lemma \ref{three}.}
  \label{last34}
  \end{figure}

\end{proof}

\section{Proof of the Theorem \ref{flag}}

The proof is by induction on $k$.  The base case $k = 2$ is trivial.  Now suppose that the statement is true up to $k$.   Let $\mathcal{F} = \{C_1,C_2,...,C_n\}$ be a simple family of $n$ $x$-monotone right-flag curves, such that $\chi(\mathcal{F}) > 2^{( 5^{k+2} - 121)/4}$.  We will show that $\mathcal{F}$ contains $k+1$ pairwise crossing members.  We define the recursive function $\lambda_k$ such that $\lambda_2 = 1$ and

$$\lambda_{k} = 5\lambda_{k-1} + 121 \hspace{1cm}\textnormal{for}\hspace{1cm}k \geq 3.$$

\noindent This implies that $\lambda_k =  ( 5^{k+1} - 121)/4$ for all $k \geq 2$.   Therefore, we have

$$\chi(\mathcal{F}) > 2^{( 5^{k+2} - 121)/4} = 2^{\lambda_{k + 1}}  = 2^{5\lambda_k + 121}.$$

 \noindent Just as before, there exists an integer $d \geq 2$, such that the set of curves $\mathcal{F}^d \subset \mathcal{F}$ at distance $d$ from the curve $C_1$ satisfies

$$\chi(\mathcal{F}^d) \geq \frac{\chi(\mathcal{F})}{2} > 2^{5\lambda_k + 120}.$$

\noindent Now we can assume that $\mathcal{F}^d$ does not contain $k+1$ pairwise crossing members, since otherwise we would be done.  By Lemma \ref{sequence}, there exists a subset $\mathcal{H} \subset \mathcal{F}^d$ such that $\chi(\mathcal{H}) > 2$, and for every pair of curves $C_a,C_b \in \mathcal{H}$ that intersect, $\mathcal{F}^d(a,b) \geq 2^{5\lambda_k + 118}$.  Fix two such curves $C_a,C_b \in \mathcal{H}$, and let $\mathcal{A}$ be the set of curves in $\mathcal{F}(a,b)$ that intersects $C_a$ or $C_b$.  By Lemma \ref{crucial}, there exists a subset $\mathcal{D}  \subset \mathcal{F}^d(a,b)$ such that each curve $C_i \in \mathcal{D}$ is disjoint to $C_a$, $C_b$, $\mathcal{A}$, and moreover

$$\chi(\mathcal{D}) \geq 2^{5\lambda_k + 118}  - 2^{\lambda_k + 1} - 2^{2\lambda_k + 102}  \geq 2^{5\lambda_k + 117}.$$

\noindent By Lemma \ref{three}, $\mathcal{D}$ contains a type 3 configuration $\{\mathcal{K}_1,\mathcal{K}_2,C_q\}$, where $\mathcal{K}_t$ is a set of $k$ pairwise intersecting curves for $t \in \{1,2\}$.  See Figure \ref{final}.  Just as argued in the proof of Lemma \ref{three}, since $C_q \in \mathcal{D} \subset \mathcal{F}^d$, there exists a path $C_1,C_{p_1},C_{p_2},...,C_{p_{d-1}},C_{q}$ in $\mathcal{F}$ such that either $p_{d-1} < a $ or $p_{d-1} > b$.  Since our curves are $x$-monotone and $x(C_q) \leq x(\mathcal{K}_1\cup \mathcal{K}_2)$, this implies that either $\mathcal{K}_1\cup C_{p_{d-1}}$ or $\mathcal{K}_2\cup C_{p_{d-1}}$ are $k+1$ pairwise crossing curves.  See Figures \ref{finala} and \ref{finalb}.

\begin{figure}[h]
\begin{center}
\includegraphics[width=160pt]{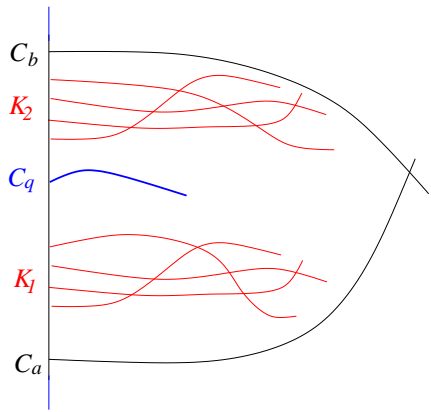}
  \caption{Type 3 configuration.}
  \label{final}
 \end{center}
\end{figure}

 \begin{figure}[h]
  \centering
\subfigure[$\mathcal{K}_1\cup C_{p_{d-1}}$ are $k+1$ pairwise crossing curves.]{\label{finala}\includegraphics[width=0.4\textwidth]{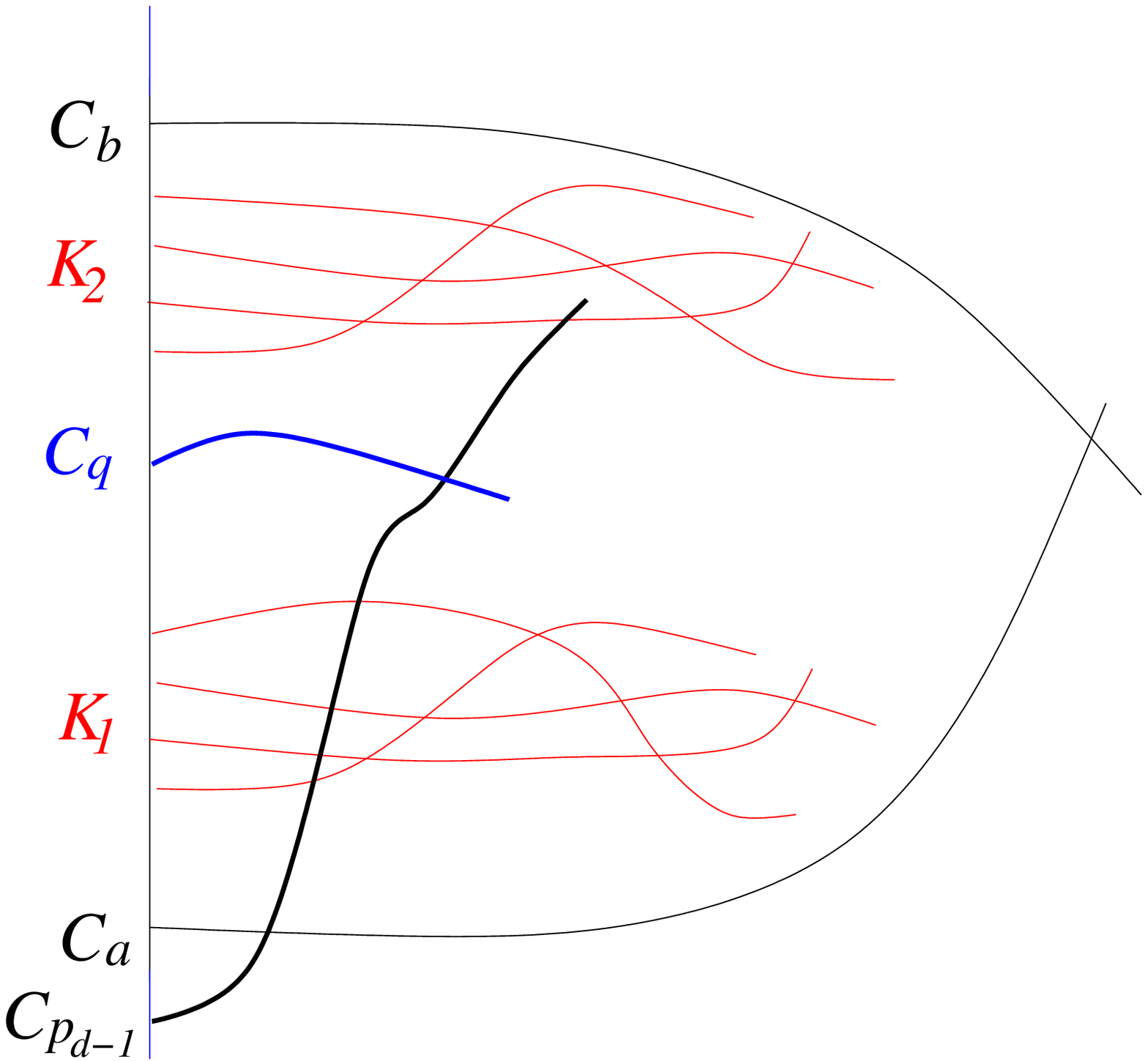}}\hspace{1cm}
\subfigure[$\mathcal{K}_2\cup C_{p_{d-1}}$ are $k+1$ pairwise crossing curves.]{\label{finalb}\includegraphics[width=0.4\textwidth]{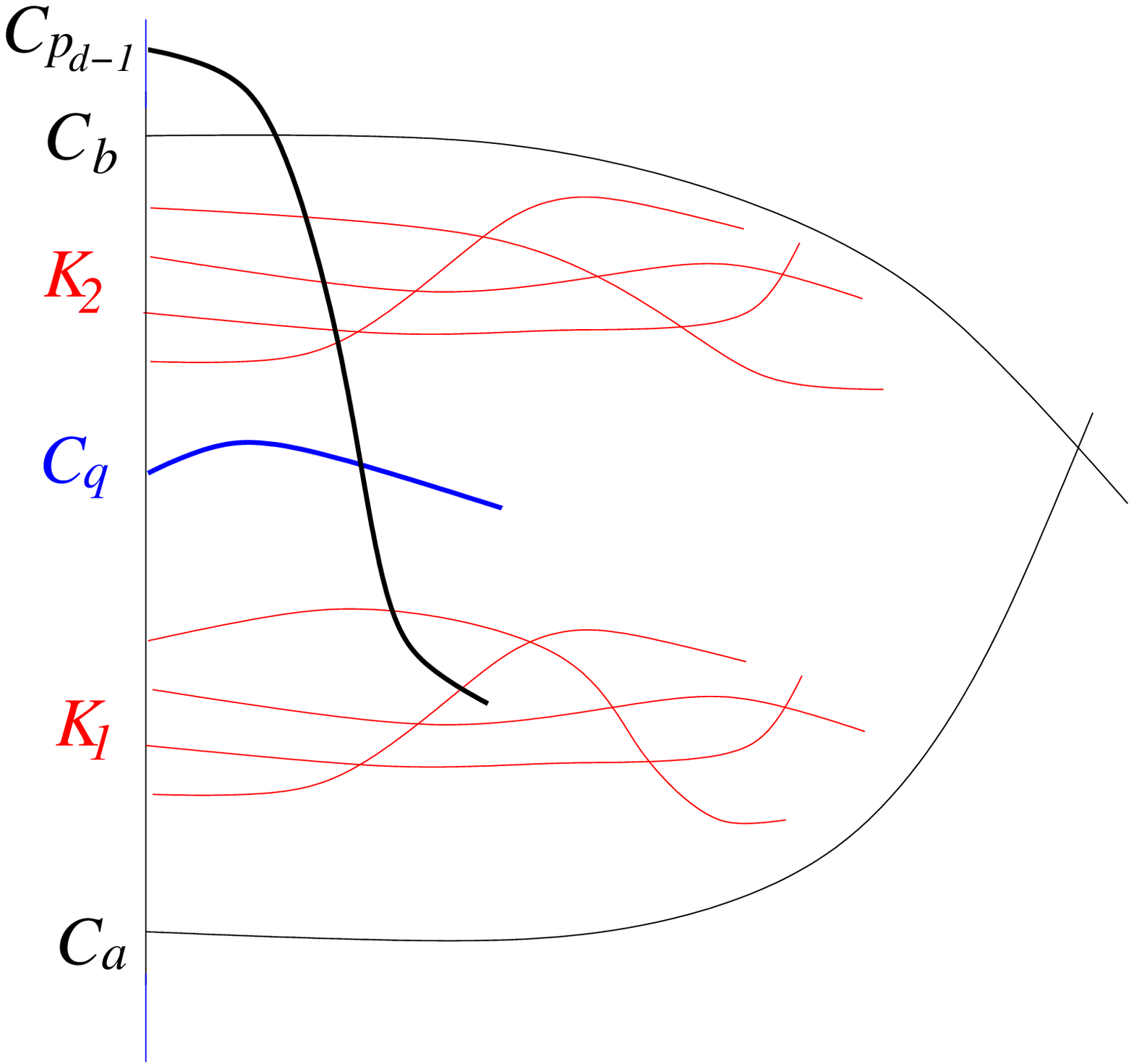}}
                        \caption{$k+1$ pairwise crossing curves.}
  \label{finalab}
\end{figure}

 $\hfill\square$

\newpage

\end{document}